\newcommand{\N}{\mathbb N}
\newcommand{\Z}{\mathbb Z}
\newcommand{\R}{\mathbb R}
\newcommand{\twosum}[2]{\sum_{\substack{#1\\#2}}}
\newcommand{\ca}{\mathcal A}
\newcommand{\p}{\mathcal P}
\newcommand{\e}{\mathcal E}
\newtheorem{defn}{Definition}[section]
\newtheorem{thm}[defn]{Theorem}
\newtheorem{lem}[defn]{Lemma}
\title{Diophantine Approximation with Products of Two Primes}
\author{A.J. Irving\\
Mathematical Institute, Oxford}
\date{}
\begin{document}
\maketitle

\section{Introduction}

Let $\|x\|$ denote the distance from the real number  $x$ to the
nearest integer.  Given an irrational $\alpha$, we are interested in
the problem of determining the values of $\tau$ for which there are
infinitely many prime solutions, $p$, to the Diophantine inequality  
\begin{equation}\label{diophantineineq}
\|p\alpha\|\leq p^{-\tau}.
\end{equation}
It is an easy consequence of the Generalised Riemann Hypothesis that
any $\tau<\frac{1}{3}$ is admissible.  This was proved unconditionally
by Matom{\"a}ki, \cite{mat}, and is currently the strongest result
known.  Progress on this problem began with Vinogradov,
\cite{vinogradov}, who proved that we can take any $\tau<\frac{1}{5}$.
Vaughan, \cite{vaughan}, simplified the proof whilst improving the
exponent to $\tau<\frac{1}{4}$.  In both of these works  an asymptotic
formula for the number of prime solutions is proved.  Harman,
\cite{har83}, introduced a sieve method to the problem.  This only
gives a lower bound for the number of solutions but this is
sufficient.  He increased the size of $\tau$ to $\tau<\frac{3}{10}$,
improving this  in \cite{har96} to $\tau<\frac{7}{22}$.  These results
of Harman used identical arithmetic information to the results of
Vaughan; the improvements were in the sieve method.  Heath-Brown and
Jia, \cite{rhbjia}, found new arithmetic information which they were
able to use to get $\tau<\frac{16}{49}$.  Matom{\"a}ki, by using
results on averages of Kloosterman sums, was able to extend this to
handle any $\tau<\frac{1}{3}$.   

If we only require the solutions  of (\ref{diophantineineq}) to have at most two prime factors
then the problem is considerably easier as classical sieve methods may
be used.  In particular Harman, \cite[Theorem 2]{har83}, states that
any $\tau<0.46$ is sufficient.   One reason for a stronger result is
that the parity problem of sieve theory is no longer an issue.  In
order to circumvent the parity problem and detect primes it is
necessary to prove estimates for bilinear forms, known as ``Type II''
sums.  Matom{\"a}ki, \cite{mat},  describes all the estimates known
for $\tau<\frac{1}{3}$ but none of her  proofs are valid for $\tau\geq
\frac{1}{3}$.  We  will prove a Type II bound  in which one may take
$\tau$ slightly larger than $\frac13$.  This estimate is too weak to
show the existence of prime solutions to (\ref{diophantineineq}).  It
does, however, show that there are solutions which have precisely two
prime factors.  Hence we can break the parity barrier for some
$\tau>\frac13$. 

We are also interested in the set $\p_3(b)$ of $3$-digit palindromes
in base $b$.  We say that a number is palindromic in base $b$ if its
digits in base $b$ are the same when reversed.  Thus  
$$\p_3(b)=\{j(b^2+1)+kb:j\in (0,b)\cap\Z,k\in [0,b)\cap\Z\}.$$
As we shall see in Section \ref{proofs}, elements in this set correspond closely to solutions of (\ref{diophantineineq}) when $\tau=\frac13$.  We may therefore also conclude that $\p_3(b)$ contains
numbers with precisely two prime factors provided that $b$ is
sufficiently large. 

To handle both of these problems simultaneously we work with the
following  set.  For a natural number $q$, positive reals $x,z$ and an
integer $a$ with $(a,q)=1$ we let  
$$\ca=\ca(x,q,z,a)=\{n\in (\frac{x}{4},x]:n\equiv ak\pmod q\text{ for some
}k\in [0,z)\cap\Z\}.$$ 
For a fixed constant $\tau\in (0,1)$ we shall only consider the case when 
$$z\in \left[\frac{1}{2}q^{\frac{1-\tau}{1+\tau}},2q^{\frac{1-\tau}{1+\tau}}\right]$$
 and 
$$x\in \left[\frac{1}{2}q^{\frac{2}{1+\tau}},2q^{\frac{2}{1+\tau}}\right].$$
All implied constants in our results may depend on $\tau$.  Observe that
$zq\asymp x$. 

Our aim is to estimate Type I and Type II sums for the set
$\ca$ and use them to prove the following. 

\begin{thm}\label{mainthm}
Suppose $\tau<\frac{8}{23}$ is fixed.  Let $\e_2$ be the set of natural numbers having precisely $2$ prime factors.  With the above definitions and hypotheses we have 
$$\#(\ca\cap \e_2)\gg \frac{z^2}{\log z},$$
provided that $q$ is sufficiently large in terms of $\tau$.
\end{thm}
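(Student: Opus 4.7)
The plan is to deduce Theorem~\ref{mainthm} from the Type I and Type II estimates for $\ca$ that will be established in the technical part of the paper, combined with a Harman-style weighted sieve. The conceptual point is that detecting numbers with exactly two prime factors is one step easier than detecting primes: the parity obstruction disappears once two prime factors are allowed, so a Type II bound valid for $\tau$ slightly above $\tfrac13$ is enough to push the argument through.

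First I would write
$$\#(\ca\cap\e_2)=\twosum{n\in\ca}{n=p_1p_2,\ p_1\le p_2}1,$$
split the smaller prime $p_1$ into dyadic ranges $P<p_1\le 2P$ with $P\in[2,x^{1/2}]$, and for each fixed $P$ attack the inner count $\sum_{m\text{ prime}}\mathbf 1_{p_1 m\in\ca}$ by applying Buchstab's identity to the primality of $m$. This reduces the problem to estimating a combination of linear (Type I) sums
$$\sum_{d\le D}\lambda_d\,\#\{n\in\ca:d\mid n\}$$
and bilinear (Type II) sums
$$\twosum{P<p\le 2P}{M<m\le 2M}a_m\,\mathbf 1_{pm\in\ca}.$$

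Next I would run an iterated Buchstab decomposition of $S(\ca,y)$ for suitable sifting levels $y$, arranging the resulting pieces in Harman's style so that every bilinear remainder lands inside the rectangle $(M,K)$ covered by the Type II estimate and every linear remainder involves only a divisor $d\le D$ inside the Type I range. Regions that fall outside both boxes are re-attacked by \emph{role-reversal}: swapping the roles of the two factors in $n=mk$, which is legitimate precisely because $\e_2$ is a symmetric target, moves the offending sum into the Type II rectangle from the opposite side. The numerical threshold $\tau<\tfrac{8}{23}$ should arise as the largest value of $\tau$ for which the Type II rectangle together with its role-reversed copy covers every Buchstab region produced by the decomposition.

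Assembling the pieces should give a bound of the shape
$$\#(\ca\cap\e_2)\ge(c_\tau+o(1))\cdot\frac{|\ca|}{\log x}$$
for an explicit $c_\tau>0$, whence the claim follows from $|\ca|\asymp z^2$ and $\log x\asymp\log z$. The main obstacle is the combinatorial bookkeeping in the middle step: as $\tau$ approaches $\tfrac{8}{23}$ from below the Type II range becomes narrow, and one must verify that every Buchstab region remains covered after role-reversal, and that the surviving main terms sum to a strictly positive $c_\tau$. That numerical verification is what pins down the exponent $\tfrac{8}{23}$ in the theorem.
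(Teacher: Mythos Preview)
Your plan is far more elaborate than what the paper actually does, and it rests on a misconception about the role of the exponent $\tfrac{8}{23}$.

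The paper uses no sieve, no Buchstab decomposition, and no role-reversal. The point is that the Type II estimate, Theorem~\ref{typeii}, is proved not for general bilinear coefficients but for the \emph{specific} choice $\beta_n=\varpi(n)-1$. This means it asserts directly that
\[
\sum_{m\sim M,\,n\sim N}\alpha_m\varpi(n)\Phi(mn)
=\sum_{m\sim M,\,n\sim N}\alpha_m\Phi(mn)+O_A\bigl(z^2(\log z)^{-A}\bigr),
\]
i.e.\ the primality condition on $n$ may be removed at negligible cost. Taking $\alpha_m=\varpi(m)$ and evaluating the right-hand side by the Type~I estimate (Theorem~\ref{typei}) together with the prime number theorem gives an \emph{asymptotic} for $\sum\varpi(m)\varpi(n)\Phi(mn)$; dividing by $(\log z)^2$ and summing over $\gg\log z$ dyadic ranges for $M$ yields the theorem. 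That is the entire proof.

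Consequently the threshold $\tau<\tfrac{8}{23}$ does not arise from any Buchstab bookkeeping. It is simply the condition under which the interval $\max(z,q/z^{1-\delta})\le N\le z^{16/15-\delta}$ in Theorem~\ref{typeii} is nonempty, which in turn comes from the exponent $\alpha=\tfrac18$ in the Kloosterman-type bound used for $S_{10}$. Your Harman-style scheme would instead require a Type~II estimate with \emph{general} coefficients $\beta_n$; as the paper remarks, the available bound in that generality (Duke--Friedlander--Iwaniec) only gives $\alpha=\tfrac{1}{48}$, so your approach, carried out as described, would produce a strictly smaller admissible $\tau$ rather than $\tfrac{8}{23}$.
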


A result of this form for $\tau<\frac13$ would follow immediately from Vaughan's work, \cite{vaughan}.  The key new idea  to handle larger $\tau$ is our Type II estimate, Theorem \ref{typeii}.

This theorem enables us to prove the following results regarding the problems discussed above.

\begin{thm}\label{diophantine}
Let $\alpha$ be irrational.  For any $\tau<\frac{8}{23}$ there exist infinitely many $n\in\e_2$ such that 
$$\|n\alpha\|\leq  n^{-\tau}.$$ 
\end{thm}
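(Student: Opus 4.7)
The plan is to derive Theorem~\ref{diophantine} from Theorem~\ref{mainthm} by using Dirichlet's approximation theorem to convert the congruence condition defining $\ca$ into control of $\|n\alpha\|$. First I would fix $\tau<8/23$ and, in order to absorb the implicit constants that will inevitably appear, choose an intermediate $\tau'\in(\tau,8/23)$. Since $\alpha$ is irrational, Dirichlet's theorem supplies infinitely many coprime pairs $(a,q)$ with $q\to\infty$ and $|\alpha-a/q|<1/q^2$.

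For each such pair with $q$ sufficiently large, I would set $x=q^{2/(1+\tau')}$ and $z=q^{(1-\tau')/(1+\tau')}$ (so the ranges required in the definition of $\ca$ are satisfied) and let $a^*$ denote the multiplicative inverse of $a$ modulo $q$. Applying Theorem~\ref{mainthm} with exponent $\tau'$ to $\ca(x,q,z,a^*)$ produces $\gg z^2/\log z$ integers $n\in\e_2$ lying in this set. By construction each such $n$ satisfies $n\equiv a^*k\pmod q$ for some $k\in[0,z)$, hence $an\equiv k\pmod q$ and $\|an/q\|\leq z/q$.

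Writing $\alpha=a/q+\beta$ with $|\beta|<1/q^2$, the triangle inequality then gives
$$\|n\alpha\|\leq\|an/q\|+n|\beta|\leq\frac{z}{q}+\frac{x}{q^2}\ll q^{-2\tau'/(1+\tau')}\asymp x^{-\tau'}\asymp n^{-\tau'},$$
since $n\asymp x$ and a direct computation shows both $z/q$ and $x/q^2$ are of order $q^{-2\tau'/(1+\tau')}=x^{-\tau'}$. Because $\tau'>\tau$, the bound $O(n^{-\tau'})$ is $\leq n^{-\tau}$ for all sufficiently large $n$, which gives the desired inequality. Letting $q\to\infty$ along our sequence of approximations produces $n\asymp q^{2/(1+\tau')}\to\infty$, so infinitely many distinct $n\in\e_2$ arise.

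There is no substantive obstacle in this deduction: all of the difficulty of the paper is concentrated in Theorem~\ref{mainthm} and, in turn, in the new Type II estimate Theorem~\ref{typeii}. The only points requiring attention are the insertion of the buffer $\tau'>\tau$ to absorb implicit constants in the inequality $\|n\alpha\|\ll n^{-\tau'}$, and verifying that the choices of $x$ and $z$ lie in the dyadic intervals specified in the hypothesis of Theorem~\ref{mainthm}; both are routine.
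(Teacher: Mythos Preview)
Your proposal is correct and follows essentially the same argument as the paper: one picks a slightly larger exponent $\tau'$ (the paper writes $\tau+\epsilon$) to absorb implicit constants, uses good rational approximations $a/q$ to $\alpha$ (the paper phrases this via continued-fraction convergents, you via Dirichlet, which is equivalent here), sets $x=q^{2/(1+\tau')}$, $z=x/q$, and applies Theorem~\ref{mainthm} to $\ca(x,q,z,\overline a)$ to produce $n\in\e_2$ with $\|an/q\|\le z/q$, whence the triangle inequality gives $\|n\alpha\|\ll n^{-\tau'}\le n^{-\tau}$. The only cosmetic difference is your explicit computation of the exponents $z/q\asymp x/q^2\asymp x^{-\tau'}$, which the paper leaves implicit.
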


\begin{thm}\label{palindrome}
For all sufficiently large $b$ we have 
$$\#(\p_3(b)\cap\e_2)\gg \frac{b^2}{\log b}.$$
\end{thm}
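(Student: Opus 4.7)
The plan is to recognize $\p_3(b)$ as (a superset of) one of the arithmetic progression sets $\ca(x,q,z,a)$ studied in Theorem \ref{mainthm}, for appropriately chosen parameters, and then invoke that theorem directly. Since the constraint in Theorem \ref{mainthm} is $\tau<\tfrac{8}{23}$, and $\tfrac{1}{3}<\tfrac{8}{23}$, we have room to choose $\tau=\tfrac{1}{3}$, which is precisely the exponent at which the palindrome structure matches $\ca$ perfectly.

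Concretely, I would set $\tau=\tfrac13$, $q=b^2+1$, $a=b$, $z=b$, and $x=b^3$. First one verifies $(a,q)=1$, which follows from $b^2+1\equiv 1\pmod b$. Next one checks the admissible ranges: with $\tau=\tfrac13$ one has $q^{(1-\tau)/(1+\tau)}=(b^2+1)^{1/2}$ and $q^{2/(1+\tau)}=(b^2+1)^{3/2}$, and for all large $b$ the chosen $z=b$ and $x=b^3$ lie within a factor of $2$ of these, as required.

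The remaining step is the set-theoretic inclusion $\ca\subseteq \p_3(b)$. Given $n\in\ca$, pick $k\in[0,b)\cap\Z$ with $n\equiv bk\pmod{b^2+1}$ (such $k$ is unique because $\gcd(b,b^2+1)=1$ and $b<b^2+1$), and set $j=(n-bk)/(b^2+1)\in\Z$. A short size check using $b^3/4<n\leq b^3$ shows $0<j<b$: indeed $j\leq n/(b^2+1)<b$, while $n-bk>b^3/4-b^2>0$ for $b$ large enough, so $j\geq 1$. Hence $n=j(b^2+1)+bk\in\p_3(b)$, and therefore $\ca\cap \e_2\subseteq \p_3(b)\cap\e_2$. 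Applying Theorem \ref{mainthm} gives
$$\#(\p_3(b)\cap\e_2)\geq \#(\ca\cap\e_2)\gg\frac{z^2}{\log z}=\frac{b^2}{\log b},$$
which is the desired bound.

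The proof is essentially bookkeeping: the only genuine content is Theorem \ref{mainthm}, so there is no real obstacle here beyond matching parameters. The one point requiring a moment of care is the uniqueness and range of $k$ and $j$ in the decomposition $n=j(b^2+1)+bk$, but both follow from elementary inequalities once $\tau=\tfrac13$ is chosen, which is the reason the palindrome problem falls within the scope of Theorem \ref{mainthm} at precisely this exponent.
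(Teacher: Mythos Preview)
Your proof is correct and follows exactly the same approach as the paper: choose $\tau=\tfrac13$, $q=b^2+1$, $a=b$, $z=b$, $x=b^3$, verify $\ca\subseteq\p_3(b)$, and apply Theorem~\ref{mainthm}. You have simply supplied more of the routine verifications (coprimality, the range checks for $z$ and $x$, and the bounds $0<j<b$) that the paper leaves to the reader.
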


\subsection*{Acknowledgements}

This work was completed as part of my DPhil, for which I was funded by EPSRC grant EP/P505666/1. I am very grateful to the
EPSRC for funding me and to my supervisor, Roger Heath-Brown, for all
his help and advice. 

\section{Notation and Useful Results}

We will write $e(x)=e^{2\pi ix}$ and 
$$\mathbf{1}_\ca(n)=\begin{cases}
1 & n\in \ca\\
0 & n\notin \ca.\\
\end{cases}$$
We will use the notation $n\sim N$ to mean $N<n\leq 2N$ and similarly $n\asymp N$ to mean $aN<n\leq bN$ for some $a,b>0$.  We will also need the Fourier transform $\hat f$ of the function $f$, defined by 
$$\hat f(x)=\int_{-\infty}^\infty f(t)e(-tx)\,dt.$$
We will write $\tau(n)$ for the number of divisors of $n$.  It is well known that for any $\epsilon>0$ we have $\tau(n)\ll_\epsilon n^\epsilon$.  It is slightly more convenient to work with a weighted version of the primes so we let 
$$\varpi(n)=\begin{cases}
\log n & n\text{ is prime}\\
0 & \text{Otherwise}.\\
\end{cases}$$
Finally, we adopt the standard convention that the value of $\epsilon$ may be different at each occurrence.  For example, we may write $x^{\epsilon}\log x\ll x^\epsilon$ and $x^{2\epsilon}\ll x^\epsilon$.  

We require the following forms of the Poisson Summation
Formula, which hold for all compactly supported smooth functions $f$,
all $v\in\R_{>0}$ and all $u\in \R$: 
\begin{equation}\label{pois1}
\sum_{m\in\Z}f(vm+u)=\frac{1}{v}\sum_{n\in \Z}\hat
f\left(\frac{n}{v}\right)e\left(\frac{un}{v}\right),
\end{equation}
\begin{equation}\label{pois2}
\sum_{n\in\Z}f\left(\frac{n}{v}\right)e\left(\frac{un}{v}\right)=v\sum_{m\in\Z}\hat f(vm-u).
\end{equation}

\section{Reduction of the Problem}

As we only require a lower bound we may smooth the function $\mathbf{1}_{\ca}$.

\begin{defn}\label{wdef}
Let $W$ be a smooth function satisfying the following conditions.

\begin{enumerate}
\item If $x\notin [\frac{1}{4},\frac{3}{4}]$ then $W(x)=0$.

\item If $x\in [\frac{1}{3},\frac{2}{3}]$ then $W(x)=1$.  

\item For all $x$, $0\leq W(x)\leq 1$.
\end{enumerate}
\end{defn}

It is a well known fact that many functions, $W$, satisfying the conditions of this definition exist.  The precise choice of $W$ does not matter but all implied constants may depend on it.  For any $B\in\N$ we may integrate by parts $B$ times to obtain the standard estimate 
\begin{equation}\label{fourierbound}
|\hat w(x)|\ll_B \min(1,|x|^{-B}).\\
\end{equation}

\begin{defn}\label{defPhi} 
Let 
$$\Phi(n)=\twosum{k}{n\equiv ka\pmod q}W(\frac{k}{z}).$$
\end{defn}

\begin{lem}
If $\frac{x}{4}<n<x$ then 
$$0\leq \Phi(n)\leq \mathbf{1}_{\ca}(n)\leq 1.$$
Therefore, to prove Theorem \ref{mainthm} it is sufficient to prove a lower bound for 
$$\twosum{\frac{x}{4}<n<x}{n\in \e_2}\Phi(n).$$
\end{lem}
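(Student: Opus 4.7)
The plan is to verify the chain of inequalities $0\le \Phi(n)\le \mathbf{1}_{\ca}(n)\le 1$ directly from the definitions, and then sum over $n\in(x/4,x)\cap\e_2$ to obtain the reduction. The outer two bounds are immediate: $\Phi(n)\ge 0$ because $W\ge 0$ by Definition \ref{wdef}, and $\mathbf{1}_\ca\le 1$ because it is an indicator. So the content sits in the middle inequality.

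To prove $\Phi(n)\le \mathbf{1}_\ca(n)$, I would first note that, by the support condition on $W$, the summation in Definition \ref{defPhi} is restricted to integers $k\in[z/4,3z/4]$, an interval of length $z/2$. Since $(a,q)=1$, the congruence $n\equiv ka\pmod q$ is equivalent to $k\equiv na^{-1}\pmod q$, which pins $k$ to a single residue class mod $q$. The key numerical observation is that $z\le 2q^{(1-\tau)/(1+\tau)}$ with $\tau\in(0,1)$, so $z/2<q$ once $q$ is sufficiently large in terms of $\tau$; hence at most one integer $k_0$ can lie in both $[z/4,3z/4]$ and the prescribed residue class. This already yields $\Phi(n)\le W(k_0/z)\le 1$.

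If $\Phi(n)>0$ then such a $k_0$ exists and automatically satisfies $k_0\in[z/4,3z/4]\subset[0,z)\cap\Z$, so $k_0$ witnesses the arithmetic-progression condition in the definition of $\ca$; combined with the hypothesis $x/4<n<x$, which places $n\in(x/4,x]$, we conclude $n\in\ca$ and so $\Phi(n)\le 1=\mathbf{1}_\ca(n)$. If $\Phi(n)=0$ the middle inequality is trivial. Summing $\Phi(n)\le \mathbf{1}_\ca(n)$ over $n\in(x/4,x)\cap\e_2$, and observing that the possible omission of the single value $n=x$ cannot harm a $\gg$ lower bound, then reduces Theorem \ref{mainthm} to proving the stated lower bound for $\sum\Phi(n)$.

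There is no substantive obstacle: the only ingredient beyond unpacking definitions is the inequality $z<2q$, which comes for free from the fixed size regime $z\asymp q^{(1-\tau)/(1+\tau)}$ with $\tau<1$ and $q$ taken large. The real work of the paper is of course the lower bound for $\sum\Phi(n)$ that this lemma reduces to, not this bookkeeping step.
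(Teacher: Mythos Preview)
Your argument is correct and is exactly the unpacking of definitions that the paper has in mind; the paper's own proof is the single sentence ``This follows immediately from the definitions of $\ca$ and $\Phi$.'' Your observation that at most one $k$ contributes (because the support of $W$ has length $z/2<q$) is precisely the content behind $\Phi(n)\le 1$, so this is the same approach spelled out in full.
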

\begin{proof}
This follows immediately from the definitions of $\ca$ and $\Phi$.
\end{proof}

\section{Type I Sums}

The Type I estimate we prove, Theorem \ref{typei},  has been known in essence since the work of Vaughan, \cite{vaughan}.  However, it is useful to
prove it again to get a result which is valid in our precise
situation.  In addition, Vaughan's proof uses estimates for
exponential sums whereas we use results from the geometry of numbers.
The exponential sum approach is possibly simpler for standard Type I
sums but we also need to estimate a variant of such sums, Theorem
\ref{typeiv},  which is easier with the geometry of numbers. 

Throughout this section $M,N\geq 1$ satisfy $\frac{x}{4}\leq MN\leq 4x$ and $M\leq z^{2-\delta}$ for some $\delta>0$.  This means that 
$$N\gg \frac{x}{M}\gg \frac{q}{z^{1-\delta}}.$$
All our implied constants may depend on $\delta$.

For an integer $m$ let 
$$\Psi(m)=\Psi(m;N)=\sum_{n\sim N}\Phi(mn).$$ 
We will consider $\Psi(m)$ as a counting function of points of a certain lattice, $\lambda(m)$.  

\begin{lem}
Let
$$\lambda(m)=\{(j,k)\in\Z^2:jq+ka\equiv 0\pmod m\}.$$
The set $\lambda(m)$ is a lattice in $\Z^2$ with determinant $m$.
\end{lem}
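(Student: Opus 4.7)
The plan is to identify $\lambda(m)$ as the kernel of an explicit group homomorphism from $\Z^2$ onto $\Z/m\Z$, and then read off both statements (being a lattice, and having determinant $m$) from the first isomorphism theorem.

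First I would define $\phi\colon \Z^2\to\Z/m\Z$ by $\phi(j,k)=jq+ka\bmod m$. This is visibly a homomorphism of abelian groups, and by the very definition $\lambda(m)=\ker\phi$. As a kernel, $\lambda(m)$ is automatically a subgroup of $\Z^2$; to confirm it is a lattice (i.e.\ of full rank) it suffices to exhibit two $\Z$-linearly independent vectors in it, and $(m,0)$ together with $(0,m)$ do the job.

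Next I would show $\phi$ is surjective. The image is the subgroup of $\Z/m\Z$ generated by $q\bmod m$ and $a\bmod m$, which is the cyclic subgroup of order $m/\gcd(q,a,m)$. Since the problem provides $(a,q)=1$, we have $\gcd(q,a,m)\mid \gcd(q,a)=1$, so $\phi$ is onto. By the first isomorphism theorem, $\Z^2/\lambda(m)\cong\Z/m\Z$ has order $m$, so $\lambda(m)$ has index $m$ in $\Z^2$.

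Finally, I would invoke the standard fact that for a full-rank sublattice $\Lambda\subseteq\Z^2$, the determinant $\det\Lambda$ (the absolute value of the determinant of any basis matrix) equals the index $[\Z^2:\Lambda]$. Combining with the previous step yields $\det\lambda(m)=m$. There is no real obstacle here; the only point requiring care is the coprimality check that makes $\phi$ surjective, which is exactly where the hypothesis $(a,q)=1$ enters.
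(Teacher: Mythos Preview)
Your proof is correct and follows essentially the same approach as the paper. The paper's argument is simply a terser version of yours: it notes that $\lambda(m)$ is clearly a lattice, then uses $(a,q)=1$ to conclude that $jq+ka$ hits every residue class modulo $m$, which is exactly your surjectivity step phrased without the explicit homomorphism language.
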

\begin{proof}
It is clear that $\lambda(m)$ is a lattice.  Since $(a,q)=1$ we know that $jq+ka$ takes on all integer values as $j,k$ vary over $\Z^2$.  Thus $jq+ka$ represents all congruence classes mod $m$ so the determinant of $\lambda(m)$ is $m$.
\end{proof}

Define $b_1(m)$ to be the shortest nonzero vector in $\lambda(m)$ and let $R_1(m)$ be the Euclidean length of $b_1(m)$.  We know, by Minkowski's Theorem, that $R_1(m)\ll \sqrt{m}$.

\begin{lem}\label{Psilem}
With the previous assumptions on $M,N,x,z$ and $q$ we have 
$$\Psi(m)=\frac{N\hat W(0)z}{q}+O(\frac{z}{R_1(m)}),$$
for any $m\sim M$.
\end{lem}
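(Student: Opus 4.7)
The plan is to reinterpret $\Psi(m)$ as a weighted count of $\lambda(m)$-points inside a parallelogram in $\R^2$ and apply a standard geometry-of-numbers estimate. Unpacking the definitions, the condition $mn\equiv ka\pmod q$ with $n\in\Z$ amounts to $qj+ka=mn$ for some $j\in\Z$, i.e.\ $(j,k)\in\lambda(m)$ and $n=(ka+qj)/m$. Hence
\begin{equation*}
\Psi(m)=\sum_{(j,k)\in\lambda(m)}W(k/z)\,\mathbf{1}\bigl[N<(ka+qj)/m\leq 2N\bigr].
\end{equation*}
Reducing $a$ modulo $q$ we may assume $1\leq a<q$, and the summand is supported on a convex parallelogram of $k$-extent at most $z/2$, $j$-extent (for fixed $k$) exactly $Nm/q$, and perimeter $O(z+Nm/q)$.

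To eliminate the smooth weight I would use the layer-cake identity $W(k/z)=\int_0^1\mathbf{1}[W(k/z)>t]\,dt$. Since $W$ is a fixed bump with finitely many critical points, each level set $\{k:W(k/z)>t\}$ is a union of $O(1)$ intervals, whence
\begin{equation*}
\Psi(m)=\int_0^1\#(\lambda(m)\cap P_t)\,dt,
\end{equation*}
with each $P_t$ a union of $O(1)$ convex parallelograms of aggregate area $|\{W(\cdot/z)>t\}|\cdot Nm/q$ and perimeter $O\bigl(|\{W(\cdot/z)>t\}|+Nm/q\bigr)$. The key input is the standard lattice-point bound
\begin{equation*}
\#(\lambda(m)\cap C)=\frac{\mathrm{area}(C)}{m}+O\!\left(\frac{\mathrm{perimeter}(C)}{R_1(m)}+1\right)
\end{equation*}
for convex $C\subset\R^2$, which one obtains by tiling $C$ with translates of a Minkowski-reduced fundamental cell of $\lambda(m)$ and bounding the cells that meet $\partial C$ by the number of shifts of $b_1(m)$ needed to sweep out a tube around the boundary.

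Applying this to each $P_t$ and integrating in $t$ via Fubini yields the main term $\frac{1}{m}\cdot\frac{Nm}{q}\int W(k/z)\,dk=\frac{Nz\hat W(0)}{q}$ and an error $O\bigl((z+Nm/q)/R_1(m)\bigr)$. The hypotheses $qz\asymp x\asymp MN$ give $Nm/q\asymp z$, collapsing the error to $O(z/R_1(m))$; the trailing $+1$ per layer is absorbed since $R_1(m)\leq\sqrt m\leq z^{1-\delta/2}\leq z$ forces $1\ll z/R_1(m)$. The main obstacle is securing the lattice-point estimate in precisely this form, with $1/R_1(m)$ governing the error rather than some coarser quantity like $1/\gcd(m,q)$; the rest is bookkeeping, with the comparability $Nm\asymp qz$ playing the decisive role in keeping the parallelogram's perimeter (and hence the error) comparable to $z$ uniformly for $m\sim M\leq z^{2-\delta}$.
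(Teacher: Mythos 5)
Your proof is correct and rests on the same key input as the paper's: the geometry-of-numbers lattice-point estimate in which $R_1(m)$ governs the boundary error, together with the observation that $Nm/q\asymp z$ keeps the parallelogram's perimeter $O(z)$. The only difference is how the smooth weight is removed: the paper slices by the value of $k$ and applies partial summation against $W'(t/z)$, whereas you slice by the value of $W$ via the layer-cake identity $W(k/z)=\int_0^1\mathbf{1}[W(k/z)>t]\,dt$ and apply Fubini; these are dual bookkeeping devices of identical strength, so the two proofs are essentially the same. One small point to keep in mind if you write this up in full: for the level sets $\{k:W(k/z)>t\}$ to consist of $O(1)$ intervals you should fix a bump $W$ that is monotone on $[\tfrac14,\tfrac13]$ and on $[\tfrac23,\tfrac34]$ (the definition in the paper permits this choice and the implied constants are allowed to depend on $W$), and as you note the implicit reduction of $a$ modulo $q$ is needed so that the slanted sides contribute $O(z)$ rather than $O(ta/q)$ to the perimeter.
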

\begin{proof}
From the definitions of $\Psi$ and $\Phi$ we get 
\begin{eqnarray*}
\Psi(m)&=&\sum_{n\sim N}\Phi(mn)\\
&=&\sum_{n\sim N}\twosum{k}{mn\equiv ka\pmod q}W(\frac{k}{z})\\
&=&\sum_{n\sim N}\twosum{j,k}{mn=jq+ka}W(\frac{k}{z})\\
&=&\twosum{(j,k)\in\lambda(m)}{(jq+ka)/m\sim N}W(\frac{k}{z}).\\
\end{eqnarray*}
Since $W$ is supported on $(0,1)$ the sum only contains points with $k\in (0,z)$.  Let 
$$f(t)=\#\{(j,k)\in \lambda(m):\frac{jq+ka}{m}\sim N,k\in (0,t]\}.$$
Summing by parts we get 
$$\Psi(m)=-\frac{1}{z}\int_0^z f(t)W'(\frac{t}{z})\,dt.$$
Let 
$$A(t)=\{(x,y)\in \R^2:\frac{xq+ya}{m}\sim N,y\in (0,t]\}.$$
By a standard result for counting lattice points we have 
$$f(t)=\frac{\text{area}(A(t))}{m}+O(\frac{\text{perimeter}(A(t))}{R_1(m)}+1).$$
The vertices of $A(t)$ are 
$$(Nm/q,0),(2Nm/q,0),((Nm-ta)/q,t),((2Nm-ta)/q,t).$$
Therefore 
$$\text{area}(A(T))=\frac{Nmt}{q}$$
and
$$\text{perimeter}(A(T))\ll \frac{NM}{q}+t+\frac{ta}{q}\ll z.$$
It follows that
\begin{eqnarray*}
\Psi(m)&=&-\frac{1}{z}\int_0^z \left(\frac{Nt}{q}+O(\frac{z}{R_1(m)}+1)\right)W'(\frac{t}{z})\,dt\\
&=&-\frac{N}{qz}\int_0^ztW'(\frac{t}{z})\,dt+O(\frac{z}{R_1(m)}+1)\\
&=&\frac{N\hat W(0)z}{q}+O(\frac{z}{R_1(m)}+1).\\
\end{eqnarray*}
Since $R_1(m)\ll \sqrt{m}\ll\sqrt{M}\ll z$ the result follows.
\end{proof}

We need a bound for the number of $m$ for which $R_1(m)$ is unusually small.

\begin{lem}\label{lcount}
For any $\epsilon>0$, any $M\leq z^{2-\delta}$ and any integer $l$ we have
$$\#\{m\leq M:R_1(m)^2=l\}\ll_\epsilon  z^\epsilon.$$
\end{lem}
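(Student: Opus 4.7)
The plan is to exploit the fact that $R_1(m)^2 = l$ forces the existence of a short nonzero vector $(j,k) \in \lambda(m)$ with $j^2 + k^2 = l$, and then to reformulate the counting as a divisor problem.

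First I would fix $l$ and observe that $\#\{m \leq M : R_1(m)^2 = l\}$ is bounded by the number of triples $(m,j,k)$ with $m \leq M$, $(j,k) \in \Z^2 \setminus \{(0,0)\}$, $j^2 + k^2 = l$, and $m \mid jq + ka$. I would then split this count by first choosing $(j,k)$ and then $m$. The number of representations $j^2 + k^2 = l$ is at most $r_2(l) \ll_\epsilon l^\epsilon$. Since $l = R_1(m)^2 \ll m \ll M \leq z^{2-\delta}$, this factor is at most $z^\epsilon$.

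Next, for each fixed nonzero $(j,k)$ with $j^2 + k^2 = l$, I need to count $m \leq M$ with $m \mid jq + ka$. The key observation is that $jq + ka \neq 0$: since $(a,q) = 1$, the equation $jq + ka = 0$ forces $q \mid k$, but $|k| \leq \sqrt{l} \leq \sqrt{M} \leq z^{1-\delta/2}$, and the standing hypothesis $z \asymp q^{(1-\tau)/(1+\tau)}$ gives $q \gg z^{(1+\tau)/(1-\tau)} \gg z^{1+\eta}$ for some $\eta = \eta(\tau) > 0$, so $|k| < q$ and hence $k = 0$ would force $j = 0$ as well. Consequently $jq + ka$ is a nonzero integer of size $\ll q\sqrt{l} \ll qz$, and the number of $m \leq M$ dividing it is at most $\tau(|jq+ka|) \ll_\epsilon (qz)^\epsilon$. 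Because $q$ is a fixed power of $z$ (up to constants depending on $\tau$), this is $\ll_\epsilon z^\epsilon$.

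Combining the two factors gives the bound $z^\epsilon \cdot z^\epsilon$, which after relabelling $\epsilon$ is the required estimate. The argument is essentially routine; the only point requiring some care is verifying that $jq+ka$ cannot vanish, which is precisely where the hypothesis $M \leq z^{2-\delta}$ (as opposed to, say, $M \leq q^2$) enters, and where the quantitative relationship between $q$ and $z$ imposed in the setup is used.
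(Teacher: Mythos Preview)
Your proposal is correct and follows essentially the same route as the paper: bound the count by $\sum_{j^2+k^2=l}\tau(jq+ka)$, verify that $jq+ka\neq 0$ using $|k|\leq\sqrt{M}<q$ together with $(a,q)=1$, and then apply the divisor bound and the bound $r_2(l)\ll l^\epsilon$. The only cosmetic difference is that the paper phrases the size estimate as $jq+ka\ll x$ rather than $\ll qz$, which is the same thing.
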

\begin{proof}
We know that $R_1(m)^2\ll m\ll M$.  Thus the only case to consider is $0<l\ll M$.

If $R_1(m)^2=l$ then there exist integers $j,k$ with $j^2+k^2=l$ and $m| jq+ka$.  It follows that the quantity of interest is bounded by 
$$\twosum{(j,k)\in\Z^2}{j^2+k^2=l}\#\{m:m|jq+ka\}\leq\twosum{(j,k)\in\Z^2}{j^2+k^2=l}\tau(jq+ka).$$
For the remainder of the proof let $h=jq+ka$, where $j^2+k^2=l$. We now use an argument by contradiction to show that $h\ne0$. If $h=0$ then
$k\ne0$ since $(j,k)\ne(0,0)$.  Moreover $q|k$, whence $|k|\ge q$.  However 
$$k\leq  \sqrt{l}\ll \sqrt{M}=o(z)=o(q),$$
giving a contradiction if $q$ is large enough.  We therefore
conclude that $h\ne0$.  In addition we have  
$$h\ll q\sqrt{l}\ll qz\ll x,$$
so that $\tau(h)\ll x^\epsilon$.  Letting $r(l)$ denote the number of ways in which $l$ may be written as the sum of two squares, the cardinality of the set in the lemma is then
$$\ll r(l)x^\epsilon\ll z^\epsilon,$$
in view of the convention on different values of $\epsilon$. 
\end{proof}

We may now prove an estimate for Type I sums.

\begin{thm}\label{typei}
If  $\alpha_m$ are  complex numbers with $|\alpha_m|\leq 1$  then, with the previous assumptions on $M,N,x,z$ and $q$ we have, for any $A>0$
$$\sum_{m\sim M,n\sim N}\alpha_m \Phi(mn)=\frac{\hat W(0)Nz}{q}\sum_{m\sim M}\alpha_m+O_A(z^2(\log z)^{-A}).$$
\end{thm}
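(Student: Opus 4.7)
The plan is to reduce the double sum to $\sum_{m\sim M}\alpha_m\Psi(m)$, insert the asymptotic from Lemma~\ref{Psilem}, and then control the resulting error via Lemma~\ref{lcount}.

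First, by definition of $\Psi(m)$, I would simply swap the order of summation to write
$$\sum_{m\sim M,n\sim N}\alpha_m\Phi(mn)=\sum_{m\sim M}\alpha_m\Psi(m).$$
Applying Lemma~\ref{Psilem} and using $|\alpha_m|\le 1$ gives
$$\sum_{m\sim M}\alpha_m\Psi(m)=\frac{\hat W(0)Nz}{q}\sum_{m\sim M}\alpha_m+O\!\left(z\sum_{m\sim M}\frac{1}{R_1(m)}\right).$$
The first term is already the claimed main term, so everything reduces to estimating the sum $S:=\sum_{m\sim M}R_1(m)^{-1}$.

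To bound $S$, I would partition the $m\sim M$ according to the value of $l=R_1(m)^2$. Since $R_1(m)^2\ll m\ll M$, only $l$ with $1\le l\ll M$ contribute. Thus
$$S=\sum_{1\le l\ll M}\frac{\#\{m\sim M:R_1(m)^2=l\}}{\sqrt{l}}\ll_\epsilon z^\epsilon\sum_{1\le l\ll M}\frac{1}{\sqrt{l}}\ll z^\epsilon\sqrt{M},$$
by Lemma~\ref{lcount}. Because the hypothesis gives $M\le z^{2-\delta}$, this yields $S\ll z^{1-\delta/2+\epsilon}$, and therefore
$$z\sum_{m\sim M}\frac{1}{R_1(m)}\ll z^{2-\delta/2+\epsilon}.$$
Choosing $\epsilon<\delta/4$ (which we may do, since $\delta$ is fixed and $\epsilon$ is at our disposal) makes this $\ll z^{2-\delta/4}$, which is certainly $O_A(z^2(\log z)^{-A})$ for every $A>0$. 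Combining with the main term completes the proof.

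The only point that requires any thought is making sure the hypotheses of Lemma~\ref{Psilem} and Lemma~\ref{lcount} are available in the current ranges of $M,N,x,z,q$; but these hypotheses match the standing assumptions of the section exactly, so there is no real obstacle. The saving of a power of $z$ over the target $(\log z)^{-A}$ reflects the fact that the Type~I estimate is not the delicate part of the argument.
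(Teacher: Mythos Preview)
Your proof is correct and matches the paper's argument essentially line for line: rewrite as $\sum_{m\sim M}\alpha_m\Psi(m)$, apply Lemma~\ref{Psilem}, and bound $\sum_{m\sim M}R_1(m)^{-1}$ via Lemma~\ref{lcount} by grouping over $l=R_1(m)^2$. The only cosmetic difference is your choice of $\epsilon<\delta/4$ versus the paper's $\epsilon<\delta/2$, which is immaterial.
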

\begin{proof}
Let
$$S=\sum_{m\sim M,n\sim N}\alpha_m \Phi(mn)=\sum_{m\sim M}\alpha_m \Psi(m).$$
Applying Lemma \ref{Psilem} we get 
$$S=\frac{N\hat W(0)z}{q}\sum_{m\sim M}\alpha_m+O(z\sum_{m\sim M}\frac{1}{R_1(m)}).$$
Using Lemma \ref{lcount} we deduce that 
\begin{eqnarray*}
\sum_{m\sim M}\frac{1}{R_1(m)}&=&\sum_{l\ll M}\frac{1}{\sqrt{l}}\#\{m\sim M,R_1(m)^2=l\}\\
&\ll_\epsilon&z^\epsilon\sum_{l\ll M}l^{-\frac12}\\
&\ll_\epsilon&z^\epsilon M^{\frac12}.\\
\end{eqnarray*}
We conclude that 
$$S=\frac{N\hat W(0)z}{q}\sum_{m\sim M}\alpha_m+O(z^{1+\epsilon}M^{\frac12}).$$
Since $M\ll z^{2-\delta}$ the error term is 
$$O(z^{2+\epsilon-\delta/2}).$$
The result follows on taking $\epsilon<\frac{\delta}{2}$.
\end{proof}

Observe that if 
$$\sum_{m\sim M}\alpha_m \asymp M$$
then the leading term in this estimate has size $\frac{xz}{q}\asymp z^2$.  This is  larger than the error term.

It is also necessary to bound a Type I sum where $\sum_{n\sim N}$ is replaced by a smooth weight.

\begin{thm}\label{typei2}
Suppose the above conditions on $M,N,x,z$ and $q$ hold. If  $\alpha_m$ are  complex numbers with $|\alpha_m|\leq 1$ then, for any $A>0$, we have 
$$\twosum{n}{m\sim M}\alpha_m W\left(\frac{n}{3N}\right)\Phi(mn)=\frac{3\hat W(0)^2Nz}{q}\sum_{m\sim M}\alpha_m+O_A(z^2(\log z)^{-A}).$$
\end{thm}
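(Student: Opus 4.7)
My plan is to follow the structure of the proof of Theorem~\ref{typei}, replacing the sharp cutoff $n\sim N$ by the smooth weight $W(n/(3N))$. Define
$$\Psi_W(m):=\sum_n W(n/(3N))\Phi(mn)=\twosum{(j,k)\in\lambda(m)}{}W\!\left(\frac{jq+ka}{3Nm}\right)W\!\left(\frac{k}{z}\right),$$
so the sum in the theorem equals $\sum_{m\sim M}\alpha_m\Psi_W(m)$. The goal is to establish the smooth analogue of Lemma~\ref{Psilem}, namely
$$\Psi_W(m)=\frac{3Nz\hat W(0)^2}{q}+O\!\left(\frac{z}{R_1(m)}+1\right),$$
after which the theorem follows by summing over $m\sim M$ exactly as in Theorem~\ref{typei}.

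To prove this I would carry out a two-dimensional Abel summation. Since $W$ vanishes at $0$ and at $\infty$, one can write $W(k/z)=\frac{1}{z}\int_0^\infty W'(t/z)\mathbf{1}_{t<k}\,dt$, and similarly for the other factor. Interchanging summation and integration (no boundary terms appear, by compact support of $W$), I obtain
$$\Psi_W(m)=\frac{1}{3Nz}\int\!\!\int W'(s/(3N))\,W'(t/z)\,g(s,t)\,ds\,dt,$$
where $g(s,t):=\#\{(j,k)\in\lambda(m):0<(jq+ka)/m<s,\ 0<k<t\}$.

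Next I would estimate $g(s,t)$ by the same lattice-point argument as in Lemma~\ref{Psilem}: the underlying region is a parallelogram with vertices $(0,0),(ms/q,0),(-ta/q,t),((ms-ta)/q,t)$, so its area is $mst/q$ and its perimeter is $\ll ms/q+t$. Since $W'(s/(3N))$ and $W'(t/z)$ force $s\ll N$ and $t\ll z$, and $mN/q\asymp x/q\asymp z$, the perimeter is $\ll z$ in the relevant range, yielding $g(s,t)=st/q+O(z/R_1(m)+1)$. Substituting this back and computing $\int_0^\infty sW'(s/(3N))\,ds=-9N^2\hat W(0)$ together with $\int_0^\infty tW'(t/z)\,dt=-z^2\hat W(0)$ (both via integration by parts) produces the claimed main term $\frac{3Nz\hat W(0)^2}{q}$; the error is $O(z/R_1(m)+1)$ because $\int|W'(s/(3N))|\,ds\ll N$ and $\int|W'(t/z)|\,dt\ll z$.

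The final step is routine: summing over $m\sim M$ with $|\alpha_m|\le 1$, Lemma~\ref{lcount} gives (exactly as in the proof of Theorem~\ref{typei}) $\sum_{m\sim M}1/R_1(m)\ll z^\epsilon M^{1/2}$, so the aggregate error is $\ll z^{1+\epsilon}M^{1/2}+M\ll z^{2-\delta/2+\epsilon}$, which is $O_A(z^2(\log z)^{-A})$ once $\epsilon<\delta/2$. The only mildly delicate point is setting up the 2D partial summation cleanly so that all boundary terms vanish; beyond that, the argument is a near-verbatim adaptation of Theorem~\ref{typei}.
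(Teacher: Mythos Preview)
Your proposal is correct and follows essentially the same route as the paper: the paper simply says ``after using partial summation to remove the smooth weight $W(n/(3N))$, the result follows by an almost identical proof to that of Theorem~\ref{typei},'' which amounts to the same two-dimensional Abel summation and lattice-point count you carry out explicitly. (One tiny slip: your formula $W(k/z)=\frac{1}{z}\int_0^\infty W'(t/z)\mathbf{1}_{t<k}\,dt$ has the inequality $t<k$, whereas your definition of $g(s,t)$ uses $k<t$; one of these should be flipped, but the computation of the main term is unaffected.)
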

\begin{proof}
After using partial summation to remove the smooth weight $W(\frac{n}{3N})$, the result follows by an almost identical proof to that of Theorem \ref{typei}. 
\end{proof}

Define $\Psi_1(m)$ by 
$$\Psi(m)=\frac{\hat W(0)Nz}{q}+\Psi_1(m).$$
We will require the following two lemmas.

\begin{lem}
For any $\epsilon>0$ and any $M,N,x,q$ and $z$ satisfying the previous assumptions we have 
$$\sum_{m\asymp M}\Psi_1(m)^2\ll_\epsilon z^{2+\epsilon}.$$
\end{lem}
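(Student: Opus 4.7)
The plan is to bound $\Psi_1(m)$ pointwise using Lemma \ref{Psilem}, square, and then exchange the order of summation to invoke Lemma \ref{lcount}.

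By Lemma \ref{Psilem} (applied for $m$ in the slightly wider range $m \asymp M$, which is permissible since the proof only uses the bounds on $M,N,x,z,q$) we have
$$\Psi_1(m) \ll \frac{z}{R_1(m)},$$
absorbing the $O(1)$ term into $O(z/R_1(m))$ since $R_1(m) \ll \sqrt{M} \ll z$. Squaring and summing gives
$$\sum_{m \asymp M} \Psi_1(m)^2 \ll z^2 \sum_{m \asymp M} \frac{1}{R_1(m)^2}.$$

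Next I would group the sum on the right according to the value of $l = R_1(m)^2$. Since $R_1(m)^2 \ll m \ll M$, only values $1 \leq l \ll M$ contribute. Applying Lemma \ref{lcount} yields
$$\sum_{m \asymp M} \frac{1}{R_1(m)^2} = \sum_{1 \leq l \ll M} \frac{1}{l} \#\{m \asymp M : R_1(m)^2 = l\} \ll_\epsilon z^\epsilon \sum_{1 \leq l \ll M} \frac{1}{l} \ll z^\epsilon \log M.$$
Since $\log M \ll \log z \ll z^\epsilon$ (with the usual convention on $\epsilon$), combining these bounds gives
$$\sum_{m \asymp M} \Psi_1(m)^2 \ll_\epsilon z^{2+\epsilon},$$
as required.

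There is no real obstacle here: both ingredients (the pointwise bound on $\Psi_1$ and the count of $m$ with a prescribed value of $R_1(m)^2$) have already been established, and the only step that requires a moment's thought is verifying that Lemma \ref{Psilem} applies uniformly in the range $m \asymp M$ rather than only in $m \sim M$, which follows because the proof of that lemma depended only on the assumptions $M \leq z^{2-\delta}$ and $R_1(m) \ll \sqrt{m} \ll z$.
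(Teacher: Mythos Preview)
Your proof is correct and follows essentially the same approach as the paper: bound $\Psi_1(m)$ pointwise via Lemma~\ref{Psilem}, square, group according to $l=R_1(m)^2$, and apply Lemma~\ref{lcount} to the harmonic sum over $l$. The only difference is cosmetic---you make explicit the absorption of $\log M$ into $z^\epsilon$ and comment on the range $m\asymp M$, both of which the paper leaves implicit.
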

\begin{proof}
From Lemma \ref{Psilem} we have 
$$\Psi_1(m)^2\ll_\epsilon \frac{z^2}{R_1(m)^2}.$$
By Lemma \ref{lcount} we get
\begin{eqnarray*}
\sum_{m\asymp M}\frac{1}{R_1(m)^2}&\ll&\sum_{l\ll M}\frac{1}{l}\#\{m\asymp M,R_1(m)^2=l\}\\
&\ll_\epsilon&z^\epsilon\sum_{l\ll M}l^{-1}\\
&\ll_\epsilon&z^\epsilon.\\ 
\end{eqnarray*}
The result follows.
\end{proof}

\begin{lem}
Under the same assumptions as the last lemma we have 
$$\sum_{m\asymp M}\Psi(m)^2\ll \frac{Nz^3}{q}.$$
\end{lem}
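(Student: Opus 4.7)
The plan is to expand the square using the decomposition $\Psi(m)=\frac{\hat W(0)Nz}{q}+\Psi_1(m)$, so that
$$\sum_{m\asymp M}\Psi(m)^2=\sum_{m\asymp M}\left(\frac{\hat W(0)Nz}{q}\right)^2+\frac{2\hat W(0)Nz}{q}\sum_{m\asymp M}\Psi_1(m)+\sum_{m\asymp M}\Psi_1(m)^2.$$
The three pieces are then handled separately. The first will turn out to give exactly the claimed order of magnitude, while the other two will be shown to be strictly smaller.

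For the first piece, the sum has $\ll M$ terms and so contributes $\ll\frac{MN^2z^2}{q^2}$. Using the standing assumption $MN\asymp x$ together with $x\asymp qz$ yields $MN^2\asymp Nqz$, so that $\frac{MN^2z^2}{q^2}\asymp\frac{Nz^3}{q}$, which matches the desired upper bound. For the third piece, I would simply quote the previous lemma, which gives $\sum_{m\asymp M}\Psi_1(m)^2\ll_\epsilon z^{2+\epsilon}$. This is acceptable because the inequality $N\gg q/z^{1-\delta}$ rearranges to $z^{2+\epsilon}\ll \frac{Nz^{3-\delta+\epsilon}}{q}$, which is $\ll\frac{Nz^3}{q}$ once $\epsilon<\delta$.

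For the cross term, apply Cauchy--Schwarz together with the previous lemma:
$$\sum_{m\asymp M}|\Psi_1(m)|\ll M^{1/2}\Bigl(\sum_{m\asymp M}\Psi_1(m)^2\Bigr)^{1/2}\ll_\epsilon M^{1/2}z^{1+\epsilon}.$$
Thus the middle term is $\ll \frac{Nz}{q}M^{1/2}z^{1+\epsilon}=\frac{NM^{1/2}z^{2+\epsilon}}{q}$. Since $M\ll z^{2-\delta}$, we have $M^{1/2}z^{\epsilon}\ll z^{1-\delta/2+\epsilon}\ll z$ provided $\epsilon<\delta/2$, and the cross term is again absorbed into $\frac{Nz^3}{q}$.

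There is no real obstacle here: the argument is a routine expand-and-estimate using the two previously established lemmas, and the only bookkeeping is to check that the parameter $\epsilon$ can be chosen small enough (smaller than $\delta/2$) so that the lower-order terms are genuinely dominated by the main term $Nz^3/q$. Combining the three estimates gives the stated bound.
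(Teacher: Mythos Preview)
Your proof is correct and follows essentially the same approach as the paper: decompose $\Psi(m)$ into its main term plus $\Psi_1(m)$, then control the $\Psi_1$ contribution via the previous lemma and compare sizes using $N\gg q/z^{1-\delta}$. The paper is marginally slicker in that it uses the trivial inequality $(a+b)^2\ll a^2+b^2$ to bypass the cross term entirely, so your Cauchy--Schwarz step, while correct, is unnecessary.
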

\begin{proof}
We have 
\begin{eqnarray*}
\sum_{m\asymp M}\Psi(m)^2&=&\sum_{m\asymp M}(\frac{N\hat W(0)z}{q}+\Psi_1(m))^2\\
&\ll&\sum_{m\asymp M}\frac{N^2z^2}{q^2}+\sum_{m\asymp M}\psi_1(m)^2\\
&\ll_\epsilon&\frac{MN^2z^2}{q^2}+z^{2+\epsilon}\\
&\ll_\epsilon& \frac{Nz^3}{q}+z^{2+\epsilon}.\\
\end{eqnarray*}
Since $N\gg \frac{q}{z^{1-\delta}}$ the first term is larger if we take a small enough $\epsilon$.
\end{proof}

We may now estimate a variant of a Type I sum which will be useful later.

\begin{thm}\label{typeiv}
Suppose that the above assumptions on $M,N,x,z$ and $q$ hold. In addition, assume that $N\leq z^{2-\delta}$.   Then, for any complex numbers $\beta_n$ bounded by $1$ and any $A>0$,
$$\twosum{m}{n_1,n_2\sim N}\beta_{n_1}W\left(\frac{m}{3M}\right)
\Phi(mn_1)\Phi(mn_2)=\frac{3NM\hat W(0)^3z^2}{q^2}\sum_{n\sim N}\beta_n+O_A(\frac{z^4(\log z)^{-A}}{M}).$$
\end{thm}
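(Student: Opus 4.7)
The plan is to sum over $n_2$ first, so as to expose $\sum_{n_2\sim N}\Phi(mn_2)=\Psi(m)$, and then to apply the decomposition $\Psi(m)=\hat W(0)Nz/q+\Psi_1(m)$ coming from Lemma~\ref{Psilem}.  This splits the left-hand side as $T_{\mathrm{main}}+T_{\mathrm{err}}$, where
$$T_{\mathrm{main}}=\frac{\hat W(0)Nz}{q}\sum_{m,\,n_1}\beta_{n_1}W\!\left(\frac{m}{3M}\right)\Phi(mn_1),$$
$$T_{\mathrm{err}}=\sum_{m,\,n_1}\beta_{n_1}W\!\left(\frac{m}{3M}\right)\Phi(mn_1)\Psi_1(m).$$

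The inner double sum in $T_{\mathrm{main}}$ has exactly the shape handled by Theorem~\ref{typei2}, but with the roles of the two variables exchanged: the discrete variable is $n_1\sim N$ and the smooth weight sits at scale $M$.  The new hypothesis $N\le z^{2-\delta}$ is precisely what is required to apply Theorem~\ref{typei2} in this transposed form, which yields
$$\sum_{m,\,n_1}\beta_{n_1}W\!\left(\frac{m}{3M}\right)\Phi(mn_1)=\frac{3\hat W(0)^2 Mz}{q}\sum_{n_1\sim N}\beta_{n_1}+O_A\!\left(z^2(\log z)^{-A}\right).$$
Multiplying by $\hat W(0)Nz/q$ delivers the desired main term, while the error multiplies out to $O_A(Nz^3(\log z)^{-A}/q)$.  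Since $MN\asymp x\asymp qz$ we have $Nz/q\asymp z^2/M$, and so this contribution is $O_A(z^4(\log z)^{-A}/M)$, as required.

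For $T_{\mathrm{err}}$ I would use $|\beta_{n_1}|\le 1$ together with $\Phi\ge 0$ to swap the order of summation and obtain
$$|T_{\mathrm{err}}|\le \sum_m W\!\left(\frac{m}{3M}\right)|\Psi_1(m)|\,\Psi(m),$$
then apply Cauchy--Schwarz.  Combined with the two second-moment lemmas $\sum_{m\asymp M}\Psi_1(m)^2\ll_\epsilon z^{2+\epsilon}$ and $\sum_{m\asymp M}\Psi(m)^2\ll Nz^3/q$, this gives
$$|T_{\mathrm{err}}|\ll_\epsilon \bigl(z^{2+\epsilon}\cdot Nz^3/q\bigr)^{1/2}\ll z^{3+\epsilon/2}M^{-1/2}.$$
Because $M\le z^{2-\delta}$ the right-hand side is $\ll z^{4-(\delta-\epsilon)/2}/M$, comfortably $O_A(z^4(\log z)^{-A}/M)$ once $\epsilon<\delta$.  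The main conceptual step --- and the only place the extra hypothesis $N\le z^{2-\delta}$ is used --- is the transposed application of Theorem~\ref{typei2} to $T_{\mathrm{main}}$; the control of $T_{\mathrm{err}}$ is a routine second-moment calculation whose power-of-$z$ savings comfortably beat the required logarithmic savings.
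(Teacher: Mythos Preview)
Your proof is correct and follows essentially the same route as the paper: sum over $n_2$ to produce $\Psi(m)$, split via $\Psi(m)=\hat W(0)Nz/q+\Psi_1(m)$, handle the main piece by the transposed Theorem~\ref{typei2} (this being the sole use of $N\le z^{2-\delta}$), and control the $\Psi_1$-piece by Cauchy--Schwarz together with the two second-moment lemmas. The bookkeeping of the error terms matches the paper's as well.
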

\begin{proof}
Let 
$$S=\twosum{m}{n_1,n_2\sim N}\beta_{n_1}W\left(\frac{m}{3M}\right)\Phi(mn_1)\Phi(mn_2)=\twosum{m}{n\sim N}\beta_nW\left(\frac{m}{3M}\right)\Phi(mn)\Psi(m).$$
Writing 
$$\Psi(m)=\frac{\hat W(0)Nz}{q}+\Psi_1(m)$$
we get a contribution from $\frac{\hat W(0)Nz}{q}$ of 
$$\frac{\hat W(0)Nz}{q}\twosum{m}{n\sim N}\beta_nW\left(\frac{m}{3M}\right)\Phi(mn).$$
This sum is in a form which can be estimated by Theorem \ref{typei2},
with $m,n$ interchanged. All the conditions needed for that theorem are satisfied since $N\leq z^{2-\delta}$.  The main term is thus  
$$\frac{3\hat W(0)^3NMz^2}{q}\sum_{n\sim N}\beta_n+O_A(\frac{z^3(\log z)^{-A}N}{q}).$$
On writing $N\ll\frac{zq}{M}$ the error here is 
$$O_A(\frac{z^4(\log z)^{-A}}{M}).$$ 

The contribution from $\Psi_1(m)$ is 
$$\twosum{m}{n\sim N}\beta_nW\left(\frac{m}{3M}\right)\Phi(mn)\Psi_1(m).$$
Trivially estimating the $\beta_n$ by $1$ this is majorised by 
$$\sum_mW\left(\frac{m}{3M}\right)\Psi(m)|\Psi_1(m)|.$$
Since $W(x)\leq 1$ for all $x$ we may remove the factor $W(\frac{m}{3M})$ and apply Cauchy's inequality to get a bound of 
$$(\sum_{m\asymp M}\Psi(m)^2)^{1/2}(\sum_{m\asymp M}\Psi_1(m)^2)^{1/2}.$$
Applying the previous two lemmas this is 
$$\ll_\epsilon N^{1/2}z^{5/2+\epsilon}q^{-1/2}\ll \frac{z^{3+\epsilon}}{\sqrt M}.$$
Since $M\leq z^{2-\delta}$ the error here is 
$$\frac{z^{3+\epsilon}\sqrt{M}}{M}\leq \frac{z^{4+\epsilon-\delta/2}}{M}.$$
The result follows on taking $\epsilon<\frac{\delta}{2}$.
\end{proof}

Observe that if 
$$\sum_{n\sim N}\beta_n\asymp N$$
then the main term in this last theorem has size 
$$\frac{N^2Mz^2}{q^2}\asymp \frac{z^4}{M}.$$

\section{Type II Sums}

We will prove the following Type II result.

\begin{thm}\label{typeii}
Let $\alpha_m$ be complex numbers bounded by $1$.  Suppose that  $\frac{x}{4}\leq MN\leq 4x$ and 
$$\max(z,\frac{q}{z^{1-\delta}})\leq N\leq z^{\frac{16}{15}-\delta}$$ 
for some $\delta>0$.  Then,  for every $A>0$, we have 
$$\sum_{m\sim M,n\sim N}\alpha_m(\varpi(n)-1)\Phi(mn)\ll z^2(\log z)^{-A},$$
where the implied constant depends on both $A$ and $\delta$.  
\end{thm}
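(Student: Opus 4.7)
The plan is to apply Cauchy-Schwarz to eliminate $\alpha_m$, then exploit the bilinear estimate Theorem~\ref{typeiv} together with the prime number theorem. Write $b_n = \varpi(n)-1$ and let $W_0 = W(\cdot/3)$, a smooth non-negative majorant of $\mathbf{1}_{(1,2]}$. Cauchy-Schwarz in $m$ gives
\[
|S|^2 \le M \sum_m W_0(m/M) \Bigl|\sum_{n \sim N} b_n \Phi(mn)\Bigr|^2 = M \sum_{n_1, n_2 \sim N} b_{n_1} b_{n_2} T(n_1, n_2),
\]
where $T(n_1, n_2) = \sum_m W_0(m/M)\Phi(mn_1)\Phi(mn_2)$. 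It will suffice to show the right-hand side is $O(z^4(\log z)^{-2A})$.

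Setting $K = 3M\hat W(0)^3 z^2/q^2$ (the pointwise value of $T$ predicted by Theorem~\ref{typeiv}) and writing $T = K + E$, I decompose
\[
\sum_{n_1, n_2} b_{n_1} b_{n_2} T(n_1, n_2) = K\Bigl(\sum_n b_n\Bigr)^2 + \sum_{n_1, n_2} b_{n_1} b_{n_2} E(n_1, n_2).
\]
By the prime number theorem with Siegel--Walfisz-type savings, $\sum_{n \sim N} b_n \ll N(\log N)^{-A}$; together with $KN^2 \asymp z^4/M$ (which follows from $MN \asymp qz$), this shows that the main term contributes at most $O(z^4(\log z)^{-2A})$ to $|S|^2$.

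For the error I would apply Cauchy-Schwarz a second time. Setting $G(n_2) = \sum_{n_1} b_{n_1} E(n_1, n_2)$,
\[
\Bigl|\sum_{n_1, n_2} b_{n_1} b_{n_2} E(n_1, n_2)\Bigr|^2 \le \Bigl(\sum_{n_2} b_{n_2}^2\Bigr)\sum_{n_2} G(n_2)^2 \ll N(\log N) \sum_{n_2} G(n_2)^2,
\]
and a further Cauchy-Schwarz in $n_1$ yields $\sum_{n_2} G(n_2)^2 \ll N(\log N) \sum_{n_1, n_2} E(n_1, n_2)^2$. The problem thus reduces to the second-moment estimate $\sum_{n_1, n_2} E(n_1, n_2)^2 \ll z^6 q^{-2}(\log z)^{-A}$. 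Expanding $\sum E^2 = \sum T^2 - 2K\sum T + K^2 N^2$ and using Theorem~\ref{typeiv} (applied with $\beta_{n_1}=1$) to control the cross-term, this in turn reduces to showing $\sum T^2 = K^2 N^2 + O(z^6 q^{-2}(\log z)^{-A})$. Such a refined estimate should follow from an analogue of Theorem~\ref{typeiv} in which the roles of $m$ and $n$ are interchanged, which is permissible because the lower bound $N \ge q/z^{1-\delta}$ in the hypothesis is equivalent to $M \le z^{2-\delta}$, the range in which the geometry-of-numbers arguments of Section~4 apply.

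The main obstacle is this $L^2$ bound on $E$: Theorem~\ref{typeiv} supplies only one-sided control (linear in a single coefficient), so extracting the required bilinear estimate demands carefully combining two applications of Cauchy-Schwarz with the swapped analogue of Theorem~\ref{typeiv}. The numerical restriction $N \le z^{16/15-\delta}$ arises from balancing the various error contributions at exactly this step, and it is this balancing that ultimately fixes the exponent $8/23$ in Theorem~\ref{mainthm} through the relation $MN \asymp qz$.
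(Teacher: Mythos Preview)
Your opening Cauchy--Schwarz and the use of Theorem~\ref{typeiv} to dispose of the term $K(\sum_n b_n)^2$ are fine and coincide with the paper's first steps. The gap is in the treatment of the error. After two further applications of Cauchy--Schwarz you reduce to the second-moment bound $\sum_{n_1,n_2} E(n_1,n_2)^2 \ll z^6 q^{-2}(\log z)^{-A}$, and you assert this follows from ``an analogue of Theorem~\ref{typeiv} with $m$ and $n$ interchanged''. But observe that
\[
\sum_{n_1,n_2\sim N} T(n_1,n_2)^2 \;=\; \sum_{m_1,m_2} W_0\!\left(\frac{m_1}{M}\right)W_0\!\left(\frac{m_2}{M}\right)U(m_1,m_2)^2,\qquad U(m_1,m_2)=\sum_{n\sim N}\Phi(m_1 n)\Phi(m_2 n),
\]
so the $L^2$ problem for $T$ in the $n$-variables is exactly the $L^2$ problem for $U$ in the $m$-variables. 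Theorem~\ref{typeiv} and any $m\!\leftrightarrow\! n$ analogue give only \emph{linear} control, of the type $\sum_{m_1}\gamma_{m_1}\sum_{m_2}U(m_1,m_2)$; they say nothing about $\sum U^2$. Your argument is therefore circular, and no amount of Type~I (geometry-of-numbers) information will close the loop. Indeed, if it did, the conclusion would hold throughout the full Type~I range $N\le z^{2-\delta}$, and the specific constraint $N\le z^{16/15-\delta}$ would have no explanation.

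The paper does not try to bound $\sum E^2$. It keeps the weights $\varpi(n_1)\varpi(n_2)$ in the correlation sum $\sum_{n_1,n_2}\varpi(n_1)\varpi(n_2)T(n_1,n_2)$ and uses that off the diagonal one has $(n_1,n_2)=1$ since both $n_i$ are prime. Two rounds of Poisson summation then convert $T$ into sums containing the phase $e(c\overline{n_1}l/n_2)$, and the decisive saving comes from a nontrivial bound for $\sum_{n_1}\varpi(n_1)e(c\overline{n_1}l/n_2)$ over primes, with exponent $\alpha=\tfrac18$. This is precisely what yields $16/15=2/(2-\alpha)$ and, via $MN\asymp qz$, the threshold $\tau<8/23=1/(3-\alpha)$. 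Your extra Cauchy--Schwarz steps discard the prime structure of the $n$-variable entirely, so this essential arithmetic input is missing from your plan; the vague ``balancing'' you invoke cannot reproduce the exponent $16/15$.
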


Observe that the restrictions on $M,N$ in this theorem imply that 
$$M\ll z^{2-\delta}.$$
The hypothesis that $N\geq z$ is only used once in our
argument, in the proof of Lemma \ref{zerosum}.  When $\tau>\frac13$ this assumption is weaker than 
$$N\geq  \frac{q}{z^{1-\delta}}.$$

Let 
$$S=\sum_{m\sim M,n\sim N}\alpha_m\beta_n\Phi(mn),$$
where 
$$\beta_n=\varpi(n)-1.$$
We wish to show that  $S=O(z^2(\log z)^{-A})$. Our arguments can be
modified to handle arbitrary $\beta_n$, although the range of $N$ is
then much smaller.  However, this introduces some additional
technicalities.  Since our Type II estimate does not cover a
sufficiently large range of $N$ to detect primes we have chosen to  give the details only for the specific choice $\beta_n=\varpi(n)-1$.   

Vaughan, \cite{vaughan},  used exponential sum methods  to establish Type II estimates which are only valid when $x^\tau<N<x^{1-2\tau}$.  This range is empty when $\tau\geq\frac{1}{3}$.  Heath-Brown and Jia, \cite{rhbjia}, introduced a new method which reduces the problem to the estimation of certain Kloosterman sums.  Matom{\"a}ki, \cite{mat},  used the same reduction but then used stronger bounds on the resulting averages of Kloosterman sums and was thus able to get enough Type II information to detect primes for any $\tau<\frac{1}{3}$.  The range of $N$ in the Type II bounds found by Heath-Brown, Jia  and Matom{\"a}ki remains nonempty as $\tau\rightarrow\frac{1}{3}$.  However, it is not valid for $\tau\geq\frac{1}{3}$ as the reduction to Kloosterman sums gives an error which is too large in this case.  Our method is essentially an extension of that of Heath-Brown and Jia which avoids this problem.

\begin{lem}
We have $S=O(\sqrt{MS_1})$ where 
$$S_1=\sum_{n_1,n_2\sim N}\beta_{n_1}\beta_{n_2}\sum_m W\left(\frac{m}{3M}\right)\Phi(mn_1)\Phi(mn_2).$$
It follows that a bound of 
$$S_1=O(\frac{z^4(\log z)^{-A}}{M}).$$
will be sufficient.
\end{lem}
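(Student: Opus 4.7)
The plan is simply Cauchy--Schwarz in $m$, followed by a smoothing of the sharp cut-off $m\sim M$ using the weight $W(m/(3M))$; the whole reduction is a packaging step rather than a genuine estimate.

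First, Cauchy--Schwarz together with the bound $|\alpha_m|\leq 1$ gives
$$|S|^2\leq \left(\sum_{m\sim M}|\alpha_m|^2\right)\sum_{m\sim M}\left|\sum_{n\sim N}\beta_n\Phi(mn)\right|^2 \leq M\sum_{m\sim M}\left|\sum_{n\sim N}\beta_n\Phi(mn)\right|^2.$$
By Definition \ref{wdef} we have $W\geq 0$ on $\R$ and $W\equiv 1$ on $[1/3,2/3]$; for $m\sim M$ the argument $m/(3M)$ lies in $(1/3,2/3]$, so $W(m/(3M))\geq \mathbf{1}_{m\sim M}$ pointwise. Since the inner expression is a nonnegative square, extending the $m$-sum to all of $\Z$ with this weight only increases the total. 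Expanding the resulting square and using that $\beta_n=\varpi(n)-1$ is real identifies the right-hand side as $S_1$, so $|S|^2\leq MS_1$ and hence $S=O(\sqrt{MS_1})$.

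For the second claim, substituting the hypothesised $S_1=O(z^4(\log z)^{-A}/M)$ yields $|S|^2\ll z^4(\log z)^{-A}$, whence $S\ll z^2(\log z)^{-A/2}$. Since $A$ is arbitrary this is precisely the bound asserted in Theorem \ref{typeii}. The genuinely hard step is not this reduction but the subsequent estimate for $S_1$; the smooth factor $W(m/(3M))$ is inserted exactly because it permits Poisson summation in $m$ and so brings the lattice-point and Type~I-variant machinery of the previous section (in the spirit of Theorem \ref{typeiv}) into play, whereas a sharp cut-off would obstruct those Fourier-analytic techniques.
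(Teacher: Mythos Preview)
Your proof is correct and follows essentially the same route as the paper: Cauchy--Schwarz in $m$ with $|\alpha_m|\le 1$, then the observation that $W(m/(3M))=1$ for $m\sim M$ allows the sharp cutoff to be replaced by the smooth weight, after which one expands the square. Your additional remarks---that $\beta_n$ is real so the cross-terms give $\beta_{n_1}\beta_{n_2}$, and the commentary on why the smoothing is introduced---are accurate embellishments but not required.
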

\begin{proof}
Applying Cauchy's inequality gives 
$$S^2\leq\sum_{m\sim M}|\alpha_m|^2\sum_{m\sim M}(\sum_{n\sim N}\beta_n \Phi(mn))^2.$$
By definition of the function $W$ we know that $W\left(\frac{m}{3M}\right)=1$ when $m\sim M$.  Therefore 
\begin{eqnarray*}
S^2&\ll&M\sum_mW\left(\frac{m}{3M}\right)(\sum_{n\sim N}\beta_n\Phi(mn))^2\\  
&=&M\sum_{n_1,n_2\sim N}\beta_{n_1}\beta_{n_2}\sum_m W\left(\frac{m}{3M}\right)\Phi(mn_1)\Phi(mn_2)\\
&=&MS_1.\\
\end{eqnarray*}
\end{proof}

On putting $\beta_n=\varpi(n)-1$   into $S_1$ we will get three sums all of which must be evaluated asymptotically.  However, on combining the sums, all the main terms will cancel and we will get the required result.  Specifically let 
$$S_1=S_{1,1}-2S_{1,2}+S_{1,3}$$
where 
$$S_{1,1}=\sum_{n_1,n_2\sim N}\varpi(n_1)\varpi(n_2)\sum_m W\left(\frac{m}{3M}\right)\Phi(mn_1)\Phi(mn_2),$$
$$S_{1,2}=\sum_{n_1,n_2\sim N}\varpi(n_1)\sum_m W\left(\frac{m}{3M}\right)\Phi(mn_1)\Phi(mn_2)$$
and
$$S_{1,3}=\sum_{n_1,n_2\sim N}\sum_m W\left(\frac{m}{3M}\right)\Phi(mn_1)\Phi(mn_2).$$

We begin by dealing with the sums $S_{1,2}$ and $S_{1,3}$.

\begin{lem}
With our assumptions on $M,N,x,q$ and $z$ we have, for $i=2,3$ that
$$S_{1,i}=\frac{3N^2M\hat W(0)^3z^2}{q^2}+O_A\left(\frac{z^4(\log z)^{-A}}{M}\right).$$
\end{lem}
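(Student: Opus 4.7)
The plan is to view both $S_{1,2}$ and $S_{1,3}$ as direct instances of the sum estimated in Theorem \ref{typeiv}, with the coefficient sequence $\beta_n$ taken respectively as $\varpi(n)$ and as the constant $1$. First I would verify the hypotheses of Theorem \ref{typeiv} under the setup of Theorem \ref{typeii}: the condition $MN \asymp x$ is given, and $N \leq z^{16/15-\delta} \leq z^{2-\delta}$ for large $z$. The bound $M \leq z^{2-\delta}$ is the only non-trivial one; it follows by combining $MN \ll x \asymp zq$ with the lower bound $N \gg q/z^{1-\delta}$ from Theorem \ref{typeii}, yielding $M \ll z^{2-\delta}$ after possibly shrinking $\delta$.

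For $S_{1,3}$, applying Theorem \ref{typeiv} with $\beta_n = 1$ produces the main term $\frac{3NM\hat W(0)^3 z^2}{q^2}\sum_{n\sim N}1$. Since $\sum_{n\sim N}1 = N + O(1)$, this equals $\frac{3N^2 M \hat W(0)^3 z^2}{q^2}$ plus an additional $O(NMz^2/q^2) = O(z^3/q)$. Since $M \leq z^{2-\delta}$, we have $M (\log z)^A \ll zq$ for large $z$, hence $z^3/q \ll z^4(\log z)^{-A}/M$, absorbed into the stated error.

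For $S_{1,2}$ I take $\beta_n = \varpi(n)$. A small technicality arises: $\varpi(n) \ll \log z$ violates the normalization $|\beta_n| \leq 1$ required by Theorem \ref{typeiv}. I would handle this by dividing the coefficients by $\log(2N)$, applying the theorem to the rescaled sequence, and multiplying back, so that the error term only picks up a factor $\log z$ that is absorbed by enlarging $A$. The prime number theorem (with classical quasi-power saving) then gives
$$\sum_{n\sim N}\varpi(n) = N + O_A\bigl(N(\log z)^{-A}\bigr),$$
using $N \geq z \to \infty$ as $q \to \infty$. Substituting, the main term becomes $\frac{3N^2 M \hat W(0)^3 z^2}{q^2}$ exactly as claimed, and the additional PNT error is $O_A\bigl(N^2 M z^2 (\log z)^{-A}/q^2\bigr)$, which using $(NM)^2 \asymp z^2 q^2$ is exactly of order $\frac{z^4(\log z)^{-A}}{M}$.

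The main (and very mild) obstacle is bookkeeping: confirming the hypothesis $M \leq z^{2-\delta}$ from the Type II setup, and absorbing the logarithmic normalization of $\varpi$ together with the PNT error into the shape $O_A(z^4(\log z)^{-A}/M)$. Beyond this, the lemma is a routine reduction to Theorem \ref{typeiv} and the prime number theorem.
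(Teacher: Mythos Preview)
Your proposal is correct and follows essentially the same approach as the paper: apply Theorem \ref{typeiv} with $\beta_n=\varpi(n)$ or $\beta_n=1$, absorb the extra $\log z$ from the $\varpi$ normalization into the error term, and use the Prime Number Theorem to evaluate $\sum_{n\sim N}\beta_n$. Your additional bookkeeping (verifying $M\ll z^{2-\delta}$ and absorbing the $O(z^3/q)$ and PNT errors into $O_A(z^4(\log z)^{-A}/M)$) is sound and is exactly what the paper's brief proof implicitly relies on.
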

\begin{proof}
We have 
$$N\leq z^{\frac{16}{15}-\delta}\leq z^{2-\delta}.$$
We may therefore use Theorem \ref{typeiv} with $\beta_n=\varpi(n)$ or $\beta_n=1$.  These coefficients are only bounded by $\log n$ but this can be absorbed into the error term.  In either case we have 
$$\sum_{n\sim N}\beta_n=N+O(N(\log N)^{-A})$$
so the result follows.
\end{proof}

Next we deal with the contribution to $S_{1,1}$ from pairs with $n_1=n_2$.  This is 
$$\sum_{n\sim N}\varpi(n)^2\sum_m W\left(\frac{m}{3M}\right)\Phi(mn)^2.$$
All the terms are positive and $\Phi$ takes values in $[0,1]$ so this is at most
$$\sum_{n\sim N}\varpi(n)^2\sum_m W\left(\frac{m}{3M}\right)\Phi(mn).$$
Using Theorem \ref{typei2} we may bound this Type I sum by $O(z^2\log N)$.  Since $M\ll z^{2-\delta}$ this is $O(\frac{z^4(\log z)^{-A}}{M})$.

The remaining terms in $S_{1,1}$ have $n_1\ne n_2$.  Since the coefficients $\varpi(n)$ are supported on primes all such pairs actually satisfy $(n_1,n_2)=1$. We therefore consider 
$$S_2=\twosum{n_1,n_2\sim N}{(n_1,n_2)=1}\varpi(n_1)\varpi(n_2)\sum_m W\left(\frac{m}{3M}\right)\Phi(mn_1)\Phi(mn_2).$$

\subsection{Harmonic Analysis of the Sum $S_{2}$}

Let 
$$T=\sum_m W\left(\frac{m}{3M}\right)\Phi(mn_1)\Phi(mn_2).$$
Since $(a,q)=1$ there exists an $\overline a$ satisfying 
$$a\overline a\equiv 1\pmod q.$$

\begin{lem}
We have 
$$T=\frac{3Mz^2}{q^2}\sum_{k_1,k_2}\hat W(\frac{k_1z}{q})\hat W(\frac{k_2z}{q})\sum_m\hat W\left(3M\left(m-\frac{\overline a(k_1n_1+k_2n_2)}{q}\right)\right).$$
\end{lem}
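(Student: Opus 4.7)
The plan is a straightforward double application of Poisson summation: once to each inner sum over $k_i$ appearing in $\Phi(mn_i)$, and once to the resulting sum over $m$.

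First I would substitute the definition of $\Phi$, giving
\[
T=\sum_m W\!\left(\tfrac{m}{3M}\right)\sum_{\substack{k_1\\ mn_1\equiv k_1 a\,(q)}}W\!\left(\tfrac{k_1}{z}\right)\sum_{\substack{k_2\\ mn_2\equiv k_2 a\,(q)}}W\!\left(\tfrac{k_2}{z}\right).
\]
For each fixed $m$, the condition $mn_i\equiv k_i a\pmod q$ is equivalent to $k_i\equiv \overline a\, m n_i\pmod q$, so $k_i$ ranges over $\overline a m n_i + q\ell_i$ with $\ell_i\in\Z$. Applying (\ref{pois1}) with $f(t)=W(t/z)$, $v=q$, $u=\overline a m n_i$, together with the scaling identity $\widehat{W(\cdot/z)}(y)=z\hat W(zy)$, transforms the inner sum to
\[
\sum_{\substack{k_i\\ mn_i\equiv k_i a\,(q)}}W\!\left(\tfrac{k_i}{z}\right)=\frac{z}{q}\sum_{k_i\in\Z}\hat W\!\left(\tfrac{k_i z}{q}\right)e\!\left(\tfrac{\overline a m n_i k_i}{q}\right).
\]

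Substituting both identities ($i=1,2$) into $T$, pulling the sums over $k_1,k_2$ outside and combining the two exponentials yields
\[
T=\frac{z^2}{q^2}\sum_{k_1,k_2}\hat W\!\left(\tfrac{k_1 z}{q}\right)\hat W\!\left(\tfrac{k_2 z}{q}\right)\sum_m W\!\left(\tfrac{m}{3M}\right)e\!\left(\tfrac{\overline a(k_1 n_1+k_2 n_2)m}{q}\right).
\]
Next I would apply (\ref{pois2}) to the inner sum over $m$: writing $\alpha=\overline a(k_1n_1+k_2n_2)/q$ and taking $f=W$, $v=3M$, $u=3M\alpha$, we get
\[
\sum_m W\!\left(\tfrac{m}{3M}\right)e(\alpha m)=3M\sum_{m\in\Z}\hat W\!\left(3M(m-\alpha)\right).
\]
Inserting this into the previous display produces precisely the asserted formula.

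The argument is essentially bookkeeping, and the only thing that could go wrong is mis-applying the two Poisson formulas (in particular, remembering that the Fourier transform of $W(\cdot/z)$ is $z\hat W(z\cdot)$, and correctly matching the parameters $u,v$ in (\ref{pois2})). There is no analytic difficulty, no need for convergence arguments—$W$ is compactly supported and smooth so the Fourier series converges absolutely by (\ref{fourierbound}).
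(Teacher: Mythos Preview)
Your proof is correct and follows essentially the same approach as the paper: Poisson summation (\ref{pois1}) applied to each $\Phi$-factor to extract the additive characters, followed by (\ref{pois2}) on the $m$-sum. The only cosmetic difference is that the paper first derives the identity $\Phi(n)=\frac{z}{q}\sum_k\hat W(kz/q)e(n\overline a k/q)$ once and then substitutes it, whereas you apply the transformation inline to each factor; the computations are identical.
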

\begin{proof}
The definition of $\Phi$ gives
\begin{eqnarray*}
\Phi(n)&=&\twosum{k}{k\equiv n\overline a\pmod q}W(\frac{k}{z})\\
&=&\sum_m W\left(\frac{qm+n\overline a}{z}\right).\\
\end{eqnarray*}
Applying the Poisson Summation Formula in the form (\ref{pois1}) we therefore get 
$$\Phi(n)=\frac{z}{q}\sum_k\hat W\left(\frac{kz}{q}\right)e\left(\frac{n\overline ak}{q}\right),$$
so that 
$$T=\frac{z^2}{q^2}\sum_{k_1,k_2} \hat W\left(\frac{k_1z}{q}\right)\hat W\left(\frac{k_2z}{q}\right)\sum_mW\left(\frac{m}{3M}\right)e\left(\frac{m\overline a(k_1n_1+k_2n_2)}{q}\right).$$
We can now use the Poisson Summation Formula (\ref{pois2}) to obtain 
$$\sum_mW\left(\frac{m}{3M}\right)e\left(\frac{m\overline a(k_1n_1+k_2n_2)}{q}\right)=3M\sum_m\hat W\left(3Mm-\frac{3M\overline a(k_1n_1+k_2n_2)}{q}\right).$$ 
The result follows on substituting this into the above expression for $T$.
\end{proof}

Let $S_3$ be the subsum of $S_2$ coming from terms with $k_1n_1+k_2n_2=0$.  Since $(n_1,n_2)=1$ any solution of this may be written uniquely as $k_1=n_2h$ and $k_2=-n_2h$ for some $h\in \Z$.  Therefore 
$$S_3=\frac{3Mz^2}{q^2}\twosum{n_1,n_2\sim N}{(n_1,n_2)=1}\varpi(n_1)\varpi(n_2)\sum_{h,m}\hat W(\frac{n_2hz}{q})\hat W(\frac{-n_1hz}{q})\hat W(3Mm).$$

\begin{lem}
For any $A>0$ we have, under the previous assumptions on $M,N,x,q$ and $z$, that  
$$S_3=\frac{3MN^2z^2\hat W(0)^3}{q^2}+O_A(\frac{z^4(\log z)^{-A}}{M}).$$
\end{lem}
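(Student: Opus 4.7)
The approach is to extract the term $(h,m)=(0,0)$ as the main contribution and to control everything else using the rapid decay of $\hat W$ from (\ref{fourierbound}). This mirrors the philosophy already used in Lemma \ref{Psilem} and Theorem \ref{typeiv}.

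First, I would isolate the $(h,m)=(0,0)$ term. Its contribution to $S_3$ is
$$\frac{3Mz^2\hat W(0)^3}{q^2}\twosum{n_1,n_2\sim N}{(n_1,n_2)=1}\varpi(n_1)\varpi(n_2).$$
Since $\varpi$ is supported on primes, the coprimality condition fails only when $n_1=n_2$, so the inner sum equals $\bigl(\sum_{n\sim N}\varpi(n)\bigr)^2-\sum_{n\sim N}\varpi(n)^2$. The Prime Number Theorem in the form $\sum_{n\sim N}\varpi(n)=N+O_A(N(\log N)^{-A})$, together with the trivial $\sum_{n\sim N}\varpi(n)^2\ll N\log N$, evaluates this as $N^2+O_A(N^2(\log N)^{-A})$; the second piece is absorbed using $N\geq z$. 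The resulting error is $O_A(MN^2z^2(\log z)^{-A}/q^2)$, and the identity $MN\asymp x\asymp qz$ simplifies this to $O_A(z^4(\log z)^{-A}/M)$, matching the stated error term.

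Next I would handle the pairs $(h,m)\neq(0,0)$ by applying (\ref{fourierbound}) with a parameter $B$ chosen large in terms of $A$ and $\delta$. For $h\neq 0$, the hypothesis $N\geq q/z^{1-\delta}$ forces $|n_ihz/q|\gg z^\delta|h|$, so $\sum_{h\neq 0}|\hat W(n_1hz/q)\hat W(n_2hz/q)|\ll_B z^{-2B\delta}$. For $m\neq 0$, the bound $N\leq z^{16/15-\delta}$ combined with $MN\asymp qz$ gives $M\gg qz^{-1/15+\delta}$; since $q\gg z$ in the range of $\tau$ considered, $M$ is a positive power of $z$ and $\sum_{m\neq 0}|\hat W(3Mm)|\ll_B M^{-B}$. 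Summing the remaining terms crudely using $\varpi(n_i)\ll\log N$ and the already-noted majorant $\hat W(0)^2$ for the $(h\ne 0, m=0)$ and $(h=0, m\ne 0)$ slices, the total contribution is $\ll MN^2z^2(\log N)^2(M^{-B}+z^{-2B\delta})/q^2$, which for $B$ sufficiently large is $O_A(z^4(\log z)^{-A}/M)$.

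There is no serious obstacle here; the only mildly delicate point is verifying that both exceptional ranges ($h\neq 0$ and $m\neq 0$) force arguments of $\hat W$ that are positive powers of $z$, which requires using the upper bound $N\leq z^{16/15-\delta}$ and the lower bound $N\geq q/z^{1-\delta}$ in tandem. Once that is in hand, everything reduces to a routine combination of the Prime Number Theorem with the Schwartz decay of $\hat W$.
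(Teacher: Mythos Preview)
Your proposal is correct and follows essentially the same approach as the paper: isolate the $(h,m)=(0,0)$ term, use the rapid decay of $\hat W$ from (\ref{fourierbound}) to show the remaining terms are negligible, handle the diagonal $n_1=n_2$ separately (which removes the coprimality constraint), and apply the Prime Number Theorem. The only cosmetic differences are that you deal with the diagonal before discarding the $(h,m)\ne(0,0)$ terms whereas the paper does it afterwards, and you give more explicit detail justifying that $M$ is a positive power of $z$ (the paper simply records $M\gg z^\delta$).
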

\begin{proof}
Our assumptions imply that for $n_i\sim N$ we have 
$$\frac{n_iz}{q}\gg \frac{Nz}{q}\gg z^\delta$$
and that 
$$M\gg z^\delta.$$
It follows, using the bound (\ref{fourierbound}),  that the contribution to $S_3$ from terms with $h\ne 0$ or $m\ne 0$ is negligible.  Specifically, for any $B\in\N$ we have 
$$S_3=\frac{3Mz^2\hat W(0)^3}{q^2}\twosum{n_1,n_2\sim N}{(n_1,n_2)=1}\varpi(n_1)\varpi(n_2)+O_B(z^{-B}).$$
Observe that 
$$\frac{Mz^2}{q^2}\sum_{n\sim N}\varpi(n)^2\ll \frac{MNz^2\log N}{q^2}\ll \frac{z^3\log N}{q}\ll_A \frac{z^4(\log z)^{-A}}{M},$$
where the last inequality uses that $M\ll z^{2-\delta}\leq qz^{1-\delta}$.  We deduce that 
$$S_3=\frac{3Mz^2\hat W(0)^3}{q^2}\sum_{n_1,n_2\sim N}\varpi(n_1)\varpi(n_2)+O_A(\frac{z^4(\log z)^{-A}}{M}).$$
The result follows on applying the Prime Number Theorem to the sum 
$$\sum_{n\sim N}\varpi(n).$$
\end{proof}

Let $S_4$ be the sum of the remaining terms from $S_2$, those with $k_1n_1+k_2n_2\ne 0$.  Thus 
$$S_4=\twosum{n_1,n_2\sim N}{(n_1,n_2)=1}\varpi(n_1)\varpi(n_2)T_1,$$
where
$$T_1=\frac{3Mz^2}{q^2}\twosum{k_1,k_2}{k_1n_1+k_2n_2\ne 0}\hat W(\frac{k_1z}{q})\hat W(\frac{k_2z}{q})\sum_m\hat W\left(3M\left(m-\frac{\overline a(k_1n_1+k_2n_2)}{q}\right)\right).$$
For any integers $m,k_1,k_2$ there exists a unique integer $k$ such that 
$$m-\frac{\overline a(k_1n_1+k_2n_2)}{q}=\frac{k}{q}.$$
There is then a unique integer $j$ such that 
$$k_1n_1+k_2n_2=jq-ka.$$
Writing $c=jq-ka$ it follows that 
$$T_1=\frac{3Mz^2}{q^2}\twosum{j,k,k_1,k_2}{k_1n_1+k_2n_2=c\ne 0}\hat W(\frac{k_1z}{q})\hat W(\frac{k_2z}{q})\hat W(\frac{3Mk}{q}).$$
If we let 
$$F(n_1,n_2;c)=\twosum{k_1,k_2}{k_1n_1+k_2n_2=c}\hat W(\frac{k_1z}{q})\hat W(\frac{k_2z}{q})$$
then 
$$S_4=\frac{3Mz^2}{q^2}\twosum{j,k}{c\ne 0}\hat W(\frac{3Mk}{q})\twosum{n_1,n_2\sim N}{(n_1,n_2)=1}\varpi(n_1)\varpi(n_2)F(n_1,n_2;c).$$

\subsection{Transforming the Function $F$}

To deal with the sum $S_4$ we begin by applying Poisson Summation to the function $F$.

\begin{lem}\label{Fpois}
Let $\overline n_1$ be an inverse of $n_1$ modulo $n_2$, which exists since $(n_1,n_2)=1$.  We have
$$F(n_1,n_2)=\frac{1}{n_2}\sum_l \hat g\left(\frac{l}{n_2};n_1,n_2,c\right)e\left(\frac{c\overline n_1 l}{n_2}\right),$$
where 
$$g(t;n_1,n_2,c)=\hat W\left(\frac{tz}{q}\right)\hat W\left(\frac{(c-tn_1)z}{n_2q}\right)$$
and $\hat g$ is the Fourier transform of $g$ with respect to the single variable $t$.
\end{lem}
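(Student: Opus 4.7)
The plan is to rewrite $F$ as a single sum over an arithmetic progression and then invoke the Poisson summation formula (\ref{pois1}). Since $(n_1,n_2)=1$, the constraint $k_1 n_1 + k_2 n_2 = c$ determines $k_2 = (c - k_1 n_1)/n_2$ uniquely from $k_1$, provided $k_1 n_1 \equiv c \pmod{n_2}$, i.e. $k_1 \equiv c\overline n_1 \pmod{n_2}$. Substituting $k_2 = (c - k_1 n_1)/n_2$ into the summand, one sees immediately that $\hat W(k_1 z/q)\hat W(k_2 z/q) = g(k_1;n_1,n_2,c)$, so
\[
F(n_1,n_2;c) = \sum_{\substack{k_1\in\Z\\ k_1\equiv c\overline n_1\ (\mathrm{mod}\ n_2)}} g(k_1;n_1,n_2,c).
\]

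The next step is to parametrise the progression by writing $k_1 = n_2 m + c\overline n_1$ with $m\in\Z$, so that the right-hand side becomes $\sum_{m\in\Z} g(n_2 m + c\overline n_1;n_1,n_2,c)$. Applying (\ref{pois1}) with $v=n_2$, $u=c\overline n_1$, and $f(t) = g(t;n_1,n_2,c)$ gives exactly
\[
\sum_{m\in\Z} g(n_2 m + c\overline n_1) = \frac{1}{n_2}\sum_{l\in\Z}\hat g\!\left(\frac{l}{n_2}\right) e\!\left(\frac{c\overline n_1 l}{n_2}\right),
\]
which is the claimed identity.

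The only mild technicality is that (\ref{pois1}) is stated for compactly supported smooth $f$, whereas $g$ is a product of two Schwartz functions (recall $\hat W$ is Schwartz by (\ref{fourierbound})) and is thus Schwartz but not compactly supported. This is not a real obstruction: the formula extends to Schwartz functions by a standard density/truncation argument, and the resulting series on both sides converge absolutely because of the rapid decay guaranteed by (\ref{fourierbound}). Beyond this, the proof is a direct unwinding of definitions followed by one application of Poisson summation, with no genuine difficulty.
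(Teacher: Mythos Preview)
Your proof is correct and follows essentially the same route as the paper: rewrite $F$ as a sum of $g(k_1)$ over the progression $k_1\equiv c\overline{n_1}\pmod{n_2}$, then apply Poisson summation (\ref{pois1}). Your remark that $g$ is Schwartz rather than compactly supported (so that a routine extension of (\ref{pois1}) is needed) is a valid observation that the paper glosses over.
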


\begin{proof}
We are interested in pairs $k_1,k_2$ satisfying the equation
$$k_1n_1+k_2n_2=c.$$
For a given $k_1$ this  has at most $1$ solution which exists if and only if 
$$k_1n_1\equiv c\pmod {n_2}.$$
Since $(n_1,n_2)=1$ this condition is equivalent to 
$$k_1\equiv c\overline{n_1}\pmod {n_2}.$$
If this congruence holds then the corresponding $k_2$ is given by 
$$k_2=\frac{c-k_1n_1}{n_2}.$$
We therefore have 
$$F(n_1,n_2)=\twosum{k}{k\equiv c\overline{n_1}\pmod{n_2}}\hat W\left(\frac{kz}{q}\right)\hat W\left(\frac{(c-kn_1)z}{n_2q}\right).$$
Now, if we let 
$$g(t;n_1,n_2,c)=\hat W\left(\frac{tz}{q}\right)\hat W\left(\frac{(c-tn_1)z}{n_2q}\right),$$
then by the Poisson Summation Formula, (\ref{pois1}), we get 
$$F(n_1,n_2)=\frac{1}{n_2}\sum_l \hat g\left(\frac{l}{n_2}\right)e\left(\frac{c\overline n_1 l}{n_2}\right).$$
\end{proof}

Applying this lemma to the sum $S_4$ we deduce that 
$$S_4=\frac{3Mz^2}{q^2}\twosum{j,k,l}{c\ne 0}\hat W(\frac{3Mk}{q})\twosum{n_1,n_2\sim N}{(n_1,n_2)=1}\varpi(n_1)\varpi(n_2)\frac{1}{n_2}\hat g\left(\frac{l}{n_2};n_1,n_2,c\right)e\left(\frac{c\overline n_1 l}{n_2}\right).$$
The sums considered by Heath-Brown and Jia, as well as by Matom{\"a}ki, are essentially just the $k=0$ terms of $S_4$.

\subsection{Terms with $l=0$}

We will need the following result concerning the function $\hat g$.

\begin{lem}\label{ltrunc}
For all $t$ and all $n_1,n_2\sim N$ we have $\hat g(t)\ll
\frac{q}{z}$.  Furthermore, if $|t|\geq \frac{4z}{q}$ then $\hat g(t)=0$. 
\end{lem}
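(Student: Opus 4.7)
The plan is to derive an explicit closed-form expression for $\hat g(t)$ from which both assertions fall out immediately. The essential observation is that, although each factor of $g$ is a Schwartz function rather than a compactly supported one, its Fourier transform is just a scaled and translated copy of $W$, which is supported on $[1/4,3/4]$. So the Fourier transform of the product should also be compactly supported.

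First, I would apply Fourier inversion to each factor, writing $\hat W(x) = \int W(y) e(-xy)\,dy$. This presents $g(u)$ as a double integral over $(y_1,y_2)$ of $W(y_1)W(y_2)$ against a phase that is linear in $u$, with a constant $e(-cz y_2/(n_2 q))$ factored out. Taking the Fourier transform in $u$ and interchanging the order of integration then produces a delta function $\delta(-y_1 z/q + y_2 n_1 z/(n_2 q) - t)$ in $y_1$; its Jacobian is $z/q$. Integrating out $y_1$ yields the identity
$$\hat g(t) = \frac{q}{z}\int W(y_2)\, W\!\left(\frac{n_1 y_2}{n_2} - \frac{tq}{z}\right) e\!\left(-\frac{cz y_2}{n_2 q}\right) dy_2.$$

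From here both claims are read off directly. The bound $|\hat g(t)| \ll q/z$ follows by trivially estimating $|W| \le 1$: the $y_2$-integration is confined to $[1/4,3/4]$ by the support of the first $W$, hence has measure at most $1/2$. For the vanishing claim, the integrand is nonzero only when both $y_2 \in [1/4,3/4]$ and $n_1 y_2/n_2 - tq/z \in [1/4,3/4]$ hold simultaneously. Using $n_1, n_2 \sim N$ to obtain $n_1/n_2 \in [1/2,2]$, these two constraints together force $tq/z \in [-5/8, 5/4]$, and in particular $|t| < 4z/q$, so $\hat g(t)$ vanishes once $|t| \ge 4z/q$.

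The whole computation is really bookkeeping; the only care required is keeping the Fourier conventions, the linear change of variables, and the Jacobian factor consistent, so that the $q/z$ out front and the phase $-cz y_2/(n_2 q)$ come out correctly. An equivalent but slightly more mechanical route would be to compute the Fourier transforms of the two factors of $g$ separately (each is a scaled and translated copy of $W$ times a linear phase, divided by an appropriate constant) and then use the convolution theorem; the support and sup-norm estimates then fall out of the same interval-length bookkeeping.
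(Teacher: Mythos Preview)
Your proof is correct and is essentially the same as the paper's: the paper invokes the convolution theorem $\hat g=\hat g_1\star\hat g_2$, computes each $\hat g_i$ explicitly as a scaled and modulated copy of $W$, and then reads off the size and support from the convolution integral, which after a change of variable is exactly your closed-form expression for $\hat g(t)$. In fact the ``equivalent but slightly more mechanical route'' you describe at the end is precisely the paper's argument.
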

\begin{proof}
Recall that 
$$g(t)=\hat W\left(\frac{tz}{q}\right)\hat W\left(\frac{(c-tn_1)z}{n_2q}\right)=g_1(t)g_2(t),$$
say. It follows that 
$$\hat g(t)=(\hat g_1\star \hat g_2)(t)=\int_{-\infty}^\infty \hat g_1(x)\hat g_2(t-x)\,dx.$$
We have 
$$g_1(t)=\hat W(\frac{tz}{q})$$
so
$$\hat g_1(t)=\frac{q}{z}W(-tq/z).$$
We also have 
$$g_2(t)=\hat W\left(\frac{(c-tn_1)z}{n_2q}\right)$$
so
$$\hat g_2(t)=\frac{n_2q}{n_1z}W(\frac{n_2qt}{n_1z})e(\frac{-ctq}{n_2z}).$$
Therefore, for all $t$ we deduce that
$$|\hat g_i(t)|\ll  \frac{q}{z}.$$
Furthermore, if $|t|\geq \frac{2z}{q}$, then
$$\hat g_i(t)=0.$$

It follows that for all $t$ we have 
$$\hat g(t)=\int_{-\infty}^\infty \hat g_1(x)\hat g_2(t-x)\,dx\ll
\int_{|x|\leq \frac{2z}{q}}(q/z)^2\,dx\ll \frac{q}{z}.$$ 
In addition, if $|t|\geq \frac{4z}{q}$ 
then for any $x$ either 
$$|x|\geq \frac{2z}{q}$$
or 
$$|t-x|\geq \frac{2z}{q}.$$
It follows that $\hat g(t)=0$.  
\end{proof}

Let $S_5$ be the subsum of $S_4$ containing the terms with $l=0$, that is 
$$S_5=\frac{3Mz^2}{q^2}\twosum{j,k}{c\ne 0}\hat
W(\frac{3Mk}{q})\twosum{n_1,n_2\sim
  N}{(n_1,n_2)=1}\varpi(n_1)\varpi(n_2)\frac{1}{n_2}\hat
g(0;n_1,n_2,c).$$ 
It is convenient to reinstate the terms with $c=0$.  These correspond
to pairs $(j,k)$ with $k=hq,j=ha$ 
so their contribution is  
$$\frac{3Mz^2}{q^2}\sum_h\hat W(3Mh)\twosum{n_1,n_2\sim N}{(n_1,n_2)=1}\varpi(n_1)\varpi(n_2)\frac{1}{n_2}\hat g(0;n_1,n_2,0).$$
From the estimate (\ref{fourierbound}) we may deduce that for any $B\in\N$ the contribution to this from terms with $h\ne 0$ is $O_B(z^{-B})$.  Using the estimate  for $\hat g$ given in Lemma \ref{ltrunc} we may bound the $h=0$ terms by 
$$\frac{MNz}{q}\ll z^2\ll_A \frac{z^4(\log z)^{-A}}{M},$$
since $M\ll z^{2-\delta}$.  It is therefore enough to bound 
$$S_6=\frac{3Mz^2}{q^2}\sum_{j,k}\hat W(\frac{3Mk}{q})\twosum{n_1,n_2\sim N}{(n_1,n_2)=1}\varpi(n_1)\varpi(n_2)\frac{1}{n_2}\hat g(0;n_1,n_2,c).$$
We may move the sum over $j$ inside the other summations to transform this to 
$$S_6=\frac{3Mz^2}{q^2}\sum_k\hat W(\frac{3Mk}{q})\twosum{n_1,n_2\sim N}{(n_1,n_2)=1}\varpi(n_1)\varpi(n_2)\frac{1}{n_2}\sum_j\hat g(0;n_1,n_2,c).$$
Inserting the definition of $\hat g$ and reordering we see that 
$$S_6=\frac{3Mz^2}{q^2}\sum_k\hat W(\frac{3Mk}{q})\twosum{n_1,n_2\sim N}{(n_1,n_2)=1}\varpi(n_1)\varpi(n_2)\frac{1}{n_2}\int_{-\infty}^\infty\hat W\left(\frac{tz}{q}\right)\sum_j\hat W\left(\frac{(c-tn_1)z}{n_2q}\right)\,dt.$$

\begin{lem}\label{zerosum}
For all $t\in\R$, $N\geq z$ and
$n_1,n_2\sim N$ we have 
$$\sum_j\hat W\left(\frac{(c-tn_1)z}{n_2q}\right)=0.$$
\end{lem}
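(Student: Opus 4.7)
The plan is to apply the Poisson Summation Formula (\ref{pois1}) directly to the sum in question. Recalling that $c = jq - ka$ with $j$ the summation variable (and $k, t, n_1, n_2$ fixed), I would set $u = -ka - tn_1$ so that the sum becomes
$$\sum_j \hat W\left(\frac{jz}{n_2} + \frac{uz}{n_2 q}\right),$$
which is in the form $\sum_j f(vj + u')$ with $f = \hat W$, $v = z/n_2$, and $u' = uz/(n_2 q)$. Applying (\ref{pois1}) transforms this to
$$\frac{n_2}{z}\sum_n \hat{\hat W}\!\left(\frac{n n_2}{z}\right) e\!\left(\frac{u' n n_2}{z}\right) = \frac{n_2}{z}\sum_n W\!\left(-\frac{n n_2}{z}\right) e\!\left(\frac{u' n n_2}{z}\right),$$
where I used Fourier inversion in the form $\hat{\hat W}(y) = W(-y)$ (which is consistent with the convention $\hat f(x) = \int f(t) e(-tx)\,dt$ fixed in the Notation section).

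The key observation is now that $W$ is supported in $[1/4, 3/4]$, so the only terms contributing to the transformed sum are those $n \in \mathbb{Z}$ with $-n n_2/z \in [1/4, 3/4]$, i.e.\ with
$$n \in \left[-\frac{3z}{4n_2},\, -\frac{z}{4n_2}\right].$$
Because $n_2 \sim N$ forces $n_2 > N \geq z$, we have $z/n_2 < 1$, so this interval is strictly contained in $(-3/4, 0)$ and therefore contains no integer. Every term vanishes, and the sum is identically zero, as required.

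There is no real obstacle here; the only subtlety is bookkeeping the Fourier conventions and checking that the hypothesis $N \geq z$ is exactly what is needed to squeeze the support interval strictly inside $(-1, 0)$. This is, incidentally, the single place in the whole argument where the lower bound $N \geq z$ is used, matching the remark made after the statement of Theorem \ref{typeii}.
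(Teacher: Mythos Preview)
Your proof is correct and follows essentially the same route as the paper: apply Poisson summation (\ref{pois1}) to the $j$-sum, use Fourier inversion to recover $W$, and then observe that the support condition on $W$ together with $n_2>N\geq z$ forces every term to vanish. The only difference is cosmetic---you track the sign through $\hat{\hat W}(y)=W(-y)$ explicitly, whereas the paper writes the transformed sum directly as $\frac{n_2}{z}\sum_j W(\frac{n_2 j}{z})e(\gamma j)$.
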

\begin{proof}
The sum is 
$$\sum_j\hat W\left(\frac{(jq-ka-tn_1)z}{n_2q}\right).$$
We may  apply the Poisson Summation Formula, (\ref{pois1}), to obtain
$$\frac{n_2}{z}\sum_j W(\frac{n_2j}{z})e(\gamma j),$$
for a $\gamma$ which depends on all the outer variables.

Since $N\geq z$ we have 
$$\frac{n_2}{z}\geq \frac{N}{z}\geq 1.$$ 
However, $W$ is supported on $[\frac{1}{4},\frac{3}{4}]$ and thus for all $n\in \N$ we have 
$$W(\frac{n_2n}{z})=0.$$
\end{proof}

It follows from this that $S_6=0$ and therefore that 
$$S_5\ll_A\frac{z^4(\log z)^{-A}}{M}.$$

\subsection{The Remaining Terms}

Let $S_7$ be the subsum of $S_4$ containing all the remaining terms,
that is to say, all those with $l\ne 0$.  Thus  
$$S_7=\frac{3Mz^2}{q^2}\twosum{j,k,l}{c\ne 0,l\ne 0}\hat W(\frac{3Mk}{q})\twosum{n_1,n_2\sim N}{(n_1,n_2)=1}\varpi(n_1)\varpi(n_2)\frac{1}{n_2}\hat g\left(\frac{l}{n_2};n_1,n_2,c\right)e\left(\frac{c\overline n_1 l}{n_2}\right).$$
We now truncate the sums over $j,k,l$ to finite ranges.  

\begin{lem}
Suppose $\eta>0$.  
The contribution to $S_7$ from $(j,k,l)$ for
which any of  
$$|l|\geq \frac{8Nz}{q},$$ 
$$|k|\geq \frac{qz^{\eta}}{M}$$
or 
$$|j|\geq Nz^{-1+2\eta}$$
hold is $O_{B,\eta}(z^{-B})$ for any $B\in\N$.  
\end{lem}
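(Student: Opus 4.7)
The plan is to handle the three truncations separately, showing each tail contributes $O_{B,\eta}(z^{-B})$ for any $B$.

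The $|l|$ truncation is immediate. Since $n_2\leq 2N$, the hypothesis $|l|\geq 8Nz/q$ forces $|l/n_2|\geq 4z/q$, so $\hat g(l/n_2;n_1,n_2,c)=0$ by Lemma \ref{ltrunc}, and these terms vanish identically.

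The $|k|$ truncation exploits rapid decay of $\hat W$. When $|k|\geq qz^\eta/M$ we have $|3Mk/q|\geq 3z^\eta$, hence $|\hat W(3Mk/q)|\ll_B z^{-\eta B}$ by (\ref{fourierbound}). Combined with the trivial bound $|\hat g|\ll q/z$ from Lemma \ref{ltrunc}, the support restriction $|l|<8Nz/q$, polynomial sums over $n_1,n_2$ (noting $\varpi(n)\ll\log N$), and a tail in $j$ handled as in the third case below, the total is bounded by a polynomial in $z$ times $z^{-\eta B}$, which is $O(z^{-B'})$ for any $B'$ upon taking $B$ large.

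The $|j|$ truncation is the heart of the argument. I would first translate it into a lower bound on $|c|=jq-ka$. Under the assumption $|k|<qz^\eta/M$ (and the convention $|a|<q$), we have $|ka|<q^2z^\eta/M\asymp Nqz^{\eta-1}$ using $N\asymp zq/M$, while $|jq|\geq Nqz^{-1+2\eta}=z^\eta\cdot Nqz^{\eta-1}$, so
$$|c|\geq(z^\eta-1)Nqz^{\eta-1}\gg Nqz^{2\eta-1}\quad\text{for $z$ large in terms of $\eta$.}$$
In contrast to the $l$ case, $\hat g$ does not vanish for large $|c|$; the decay has to be extracted from the $c$-dependent oscillatory phase in $\hat g_2$. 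A direct Fourier calculation refining the one in the proof of Lemma \ref{ltrunc} gives $\hat g_2(s)=\hat g_2^\sharp(s)\,e(-sc/n_1)$ with $c$-independent amplitude $\hat g_2^\sharp(s)=(n_2q/(n_1z))W(sn_2q/(n_1z))$. The convolution representation $\hat g=\hat g_1*\hat g_2$ then becomes
$$\hat g(u)=e(-uc/n_1)\int \hat g_1(x)\,\hat g_2^\sharp(u-x)\,e(xc/n_1)\,dx,$$
and integrating by parts $B$ times against the phase $e(xc/n_1)$, using that the amplitude is smooth, supported in an interval of length $\ll z/q$, with $B$-th derivative $\ll_B(q/z)^{B+2}$, yields
$$|\hat g(u)|\ll_B (q/z)\bigl(n_1q/(|c|z)\bigr)^B.$$
The lower bound on $|c|$ gives $n_1q/(|c|z)\ll z^{-2\eta}$ at the threshold, with additional savings for larger $|j|$ via $|c|\geq|jq|/2$ in our range. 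Summing the geometric tail in $j$ against polynomial bounds from the remaining variables (for instance $\sum_k|\hat W(3Mk/q)|\ll q/M$ and the $l$-sum of length $\ll Nz/q$) then yields the claimed $O_B(z^{-B})$.

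The main obstacle is this third case. It requires the integration by parts against the hidden oscillation in $\hat g_2$, together with care to verify that the margin between the lower bound $|c|\gtrsim Nqz^{2\eta-1}$ and the saturation threshold $|c|\sim Nq/z$ is genuinely of size $z^{2\eta}$, so that each integration by parts produces savings large enough to absorb the polynomial loss from the remaining summations.
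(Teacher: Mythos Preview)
Your argument is correct, and your treatment of the $l$- and $k$-tails matches the paper's. For the $j$-tail, however, you take a more elaborate route than necessary. The paper does not manipulate $\hat g$ via the convolution $\hat g_1*\hat g_2$ and integration by parts against the phase $e(xc/n_1)$. Instead it simply reopens the defining integral
\[
\hat g\Big(\frac{l}{n_2}\Big)=\int_{-\infty}^{\infty}\hat W\Big(\frac{tz}{q}\Big)\,\hat W\Big(\frac{(jq-ka-tn_1)z}{n_2q}\Big)\,e\Big(\!-\frac{tl}{n_2}\Big)\,dt,
\]
takes absolute values inside, and observes that once one has restricted to $|t|<q/z^{1-\eta}$ and $|k|<qz^{\eta}/M$ (both enforced by the decay of the first two $\hat W$ factors via (\ref{fourierbound})), the argument of the third $\hat W$ is $\gg z^{\eta}$ whenever $|j|\geq Nz^{-1+2\eta}$, so (\ref{fourierbound}) again gives $O_B(z^{-B})$ directly. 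In effect, the paper uses that the \emph{integrand} $g(t)$ itself is tiny for large $|c|$ on the effective $t$-support, rather than extracting decay from an oscillatory integral. Your integration-by-parts argument recovers the same saving (indeed a factor $z^{-2\eta}$ per step), but the paper's approach is shorter and avoids the derivative bookkeeping and the tail-in-$j$ summation you had to carry out.
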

\begin{proof}
From Lemma \ref{ltrunc} we know that if $|t|\geq \frac{4z}{q}$ then
$\hat g(t)=0$.  It follows that terms with 
$$|l|\geq \frac{8Nz}{q}$$
make no contribution to the sum.  

Let $R$ be the set of $(j,k)$ for which 
$$|k|\geq \frac{qz^{\eta}}{M}$$
or 
$$|j|\geq Nz^{-1+2\eta}.$$
To complete the proof it is sufficient to give a bound of $O_B(z^{-B})$ for 
$$\sum_{(j,k)\in R}\left|\hat W(\frac{3Mk}{q})\hat g\left(\frac{l}{n_2};n_1,n_2,c\right)\right|.$$
By definition of $\hat g$ this is at most 
$$\int_{-\infty}^\infty\sum_{(j,k)\in R}\left|\hat W(\frac{3Mk}{q})\hat W\left(\frac{tz}{q}\right)\hat W\left(\frac{(jq-ka-tn_1)z}{n_2q}\right)\right|\,dt.$$
We make repeated use of the estimate (\ref{fourierbound}).  This shows that any part of the above where $\hat W$ is evaluated at a point $x$ with $|x|\geq z^{\eta}$ may be bounded by $O_B(z^{-B})$.  From the factor 
$\hat W(\frac{tz}{q})$
we see that such a bound holds when 
$$|t|\geq \frac{q}{z^{1-\eta}}$$
and from the factor 
$\hat W(\frac{3Mk}{q})$
it holds when 
$$|k|\geq  \frac{qz^{\eta}}{M}.$$
Finally we assume that 
$$|t|< \frac{q}{z^{1-\eta}}$$
and
$$|k|<  \frac{qz^{\eta}}{M}.$$
In this case we have 
$$|j|\geq Nz^{-1+2\eta}.$$
For sufficiently large $q$ these assumptions imply that 
$$\frac{(jq-ka-tn_1)z}{n_2q}\gg z^{\eta}.$$
A bound of $O_B(z^{-B})$ therefore holds for all parts of the sum.
\end{proof}

Let $S_8$ be the sum $S_7$ with the following ranges of summation:
$$0<|l|<\frac{8Nz}{q},$$
$$|k|< \frac{qz^{\eta}}{M}$$
and
$$|j|< Nz^{-1+2\eta}.$$
The last lemma shows that, for a fixed $\eta>0$,  we only need to bound $S_8$.  We ignore any potential cancellation in the outer sums so we write 
$$S_8\ll \frac{Mz^2\log  N}{q^2N}
\twosum{|j|<Nz^{-1+2\eta},\,|k|<\frac{qz^{\eta}}{M},\,0<|l|<\frac{8Nz}{q}}{c\ne0}S_9$$ 
where 
$$S_9=\sum_{n_2\sim N}|\twosum{n_1\sim N}{(n_1,n_2)=1}\varpi(n_1)\hat
g\left(\frac{l}{n_2};n_1,n_2,c\right)e\left(\frac{c\overline n_1
    l}{n_2}\right)|.$$ 
Let $h(n_1,n_2)$ be the weight in this sum: 
$$h(n_1,n_2)=\hat g\left(\frac{l}{n_2}\right)=\int_{-\infty}^\infty \hat W\left(\frac{tz}{q}\right)\hat W\left(\frac{(c-tn_1)z}{n_2q}\right)e(-\frac{tl}{n_2})\,dt.$$

\begin{lem}
The function $h$ depends smoothly on $n_1$ and $n_2$.  For
$n_1,n_2\sim N$ and the same $\eta$ as above, we have  
$$h(n_1,n_2)\ll \frac{q}{z}$$
and
$$h_{n_1}(n_1,n_2)\ll_{\eta} \frac{q}{Nz^{1-\eta}}.$$
\end{lem}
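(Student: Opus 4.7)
The smoothness of $h$ in $(n_1,n_2)$ is immediate: the integrand is a smooth function of $n_1$ and $n_2$ for $n_2\ne 0$, and the rapid decay guaranteed by (\ref{fourierbound}) justifies differentiation under the integral sign to all orders. The bound $h(n_1,n_2)\ll q/z$ is essentially a restatement of Lemma \ref{ltrunc}, since by construction $h(n_1,n_2)=\hat g(l/n_2;n_1,n_2,c)$, and the first assertion of that lemma applies for any real argument. So this half of the claim costs us nothing new.

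For the derivative, my plan is simply to differentiate under the integral sign. Since $n_1$ enters the integrand only through the second factor, one obtains
$$h_{n_1}(n_1,n_2)=-\frac{z}{n_2 q}\int_{-\infty}^\infty t\,\hat W\!\left(\frac{tz}{q}\right)\hat W'\!\left(\frac{(c-tn_1)z}{n_2 q}\right)e\!\left(-\frac{tl}{n_2}\right)dt.$$
I would then estimate this using (\ref{fourierbound}), together with the analogous bound for $\hat W'$ coming from the same integration-by-parts argument applied to the derivative of $W$. The factor $\hat W(tz/q)$ forces the contribution from $|t|>qz^{\eta-1}$ to be $O_{B,\eta}(z^{-B})$ for every $B$, while on the complementary range $|t|\le qz^{\eta-1}$ the integrand is trivially bounded by $|t|$, so that the truncated integral is $\ll q^2/z^{2-2\eta}$. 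Combining the two contributions gives
$$h_{n_1}(n_1,n_2)\ll_\eta \frac{z}{Nq}\cdot\frac{q^2}{z^{2-2\eta}}=\frac{q}{Nz^{1-2\eta}},$$
and replacing $\eta$ by $\eta/2$ yields the stated bound.

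There is no serious obstacle here; the argument is purely a matter of unpacking definitions and invoking the rapid decay of $\hat W$. The only mild subtlety is that one cannot naively regard $\hat W(tz/q)$ as compactly supported, which is precisely why the small loss of $z^\eta$ in the derivative bound arises when one truncates the tail.
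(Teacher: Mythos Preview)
Your argument is correct and follows essentially the same route as the paper: differentiate under the integral sign, split the $t$-integral at $|t|=q/z^{1-\eta}$ (the paper uses $q/z^{1-\eta/2}$ directly rather than halving $\eta$ at the end), discard the tail using the rapid decay of $\hat W$, and bound the truncated integral trivially using $|\hat W|,|\hat W'|\ll 1$. The only cosmetic difference is your explicit integration of $|t|$ over the truncated range versus the paper's slightly cruder replacement $|t|z/(Nq)\le z^{\eta/2}/N$ before integrating.
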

\begin{proof}
Since $W$ is smooth, it follows that $g$ depends smoothly on $n_1,n_2$ and therefore so does $\hat g$ and hence so does $h$.  The bound for $h$ follows from that for $\hat g$ given in Lemma \ref{ltrunc}.  

Differentiating we get 
$$h_{n_1}(n_1,n_2)=\int_{-\infty}^\infty \hat W\left(\frac{tz}{q}\right)\frac{-tz}{n_2q}\hat W'\left(\frac{(c-tn_1)z}{n_2q}\right)e(-\frac{tl}{n_2})\,dt.$$
The contribution to the integral from $|t|\geq \frac{q}{z^{1-\eta/2}}$ can be shown to be sufficiently small.  The remainder of the integral is then bounded by 
$$\int_{|t|\leq \frac{q}{z^{1-\eta/2}}}\frac{tz}{Nq}\,dt\leq \int_{|t|\leq \frac{q}{z^{1-\eta/2}}}\frac{z^{\eta/2}}{N}\,dt\ll \frac{q}{Nz^{1-\eta}}.$$
\end{proof}

We may now use partial summation to remove the weight $h(n_1,n_2)$ from $S_9$.  We deduce that
$$S_9\ll_{\eta} \frac{q}{z^{1-\eta}}S_{10}$$
where 
$$S_{10}=\max_{N'\sim N}\sum_{n_2\sim N}|\twosum{N\leq n_1<N'}{(n_1,n_2)=1}\varpi(n_1)e\left(\frac{c\overline n_1 l}{n_2}\right)|.$$
We will estimate $S_{10}$ using our bound, \cite[Theorem 1.3]{mykloost}.  For any $\epsilon>0$ this gives 
$$S_{10}\ll_\epsilon\left(1+\frac{|cl|}{N^2}\right)^{\frac12}N^{2-\alpha-\epsilon},$$
with the specific value $\alpha=\frac18$.  Since 
$$0<|cl|\ll N^2z^{2\eta}$$
we deduce that 
$$S_{10}\ll_\epsilon z^{\eta}N^{2-\alpha+\epsilon}.$$
We will eventually choose $\eta$ in such a way that the factor $z^\eta$ in this bound has no effect on the quality of our final result.  It is the value of $\alpha$ which determines the size of the admissible range for $N$ and hence the limitation on $\tau$.

\begin{lem}
Under the previous assumptions on $M,N,x,q$ and $z$ we have 
$$S_7\ll_A \frac{z^4(\log z)^{-A}}{M},$$
for any fixed $A>0$.
\end{lem}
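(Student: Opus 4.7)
The plan is to assemble the pieces already established. The truncation lemma has shown that $S_7$ differs from $S_8$ by at most $O_B(z^{-B})$, and the partial summation combined with the Kloosterman sum bound from \cite{mykloost} gave $S_9\ll_\eta qz^{2\eta-1+\epsilon}N^{2-\alpha}$ with $\alpha=\tfrac18$. The only remaining work is to sum $S_9$ over the allowed ranges of $j,k,l$ and match the total against the target $z^4(\log z)^{-A}/M$.

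First I count the triples. The permitted ranges $|j|<Nz^{-1+2\eta}$, $|k|<qz^\eta/M$, $0<|l|<8Nz/q$ give at most
$$\ll \frac{Nz^{-1+2\eta}\cdot qz^{\eta}/M\cdot Nz/q}{1}=\frac{N^2z^{3\eta}}{M}$$
triples. Multiplying by the $S_9$ estimate and by the outer factor $\frac{Mz^2\log N}{q^2N}$ appearing in the bound for $S_8$ yields
$$S_8\ll_\eta \frac{Mz^2\log N}{q^2N}\cdot\frac{N^2z^{3\eta}}{M}\cdot qz^{2\eta-1+\epsilon}N^{2-\alpha}=\frac{N^{3-\alpha}z^{1+5\eta+\epsilon}\log N}{q}.$$
Substituting $q\asymp MN/z$, which follows from $MN\asymp x\asymp zq$, this becomes
$$S_8\ll_\eta \frac{N^{2-\alpha}z^{2+5\eta+\epsilon}\log N}{M}.$$

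The final step combines this with the hypothesis $N\leq z^{16/15-\delta}$. Since $2-\alpha=\tfrac{15}{8}$, we have $N^{2-\alpha}\leq z^{2-15\delta/8}$, so
$$S_8\ll_\eta \frac{z^{4-15\delta/8+5\eta+\epsilon}\log N}{M}.$$
Choosing $\eta$ and $\epsilon$ sufficiently small in terms of $\delta$ produces a genuine power saving $z^{-c(\delta)}$ over $z^4/M$, which beats any $(\log z)^{-A}$. Combined with the negligible error coming from the truncation step, this gives $S_7\ll_A z^4(\log z)^{-A}/M$ as required.

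There is no substantive obstacle remaining — the work is bookkeeping of exponents. The genuine analytic content has already been deployed: it is the Kloosterman sum bound with exponent $\alpha=\tfrac18$ that forces the threshold $N\leq z^{16/15-\delta}$, and this in turn is what eventually produces the bound $\tau<\tfrac{8}{23}$ in the main theorem.
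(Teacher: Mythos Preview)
Your proof is correct and follows essentially the same route as the paper: count the triples $(j,k,l)$, feed in the bound $S_9\ll qz^{2\eta-1}N^{2-\alpha+\epsilon}$ to obtain $S_8\ll N^{3-\alpha}z^{1+5\eta+\epsilon}/q$, convert $q\asymp MN/z$, and then use $N\le z^{16/15-\delta}$ to extract a power saving. The only cosmetic differences are that the paper multiplies through by $M$ before inserting the bound on $N$, and absorbs your stray $\log N$ into the $z^\epsilon$ convention.
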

\begin{proof}
We deduce from our bound for $S_{10}$ that 
$$S_9\ll_\epsilon \frac{q}{z^{1-2\eta}}N^{2-\alpha+\epsilon}$$
and therefore that 
$$S_8\ll_\epsilon \frac{N^{3-\alpha}z^{1+5\eta+\epsilon}}{q}.$$
By assumption we have 
$$N\leq z^{\frac{16}{15}-\delta}=z^{\frac{2}{2-\alpha}-\delta}.$$
It follows that
\begin{eqnarray*}
MS_8&\ll_\epsilon& \frac{MN^{3-\alpha}z^{1+5\eta+\epsilon}}{q}\\
&\ll& N^{2-\alpha}z^{2+5\eta+\epsilon}\\
&\leq& z^{4-\delta(2-\alpha)+5\eta+\epsilon}.\\
\end{eqnarray*}
We can choose $\epsilon,\eta$ sufficiently small so that 
$$5\eta+\epsilon< \delta(2-\alpha),$$
whence
$$S_8\ll_{\delta}\frac{z^4(\log z)^{-A}}{M}.$$
The bound for $S_7$ follows.
\end{proof}

Recall that we are assuming  $N\gg \frac{q}{z^{1-\delta}}$.  Observe that 
$$\frac{q}{z}<z^{\frac{2}{2-\alpha}}$$
if and only if 
$$q<z^{\frac{4-\alpha}{2-\alpha}}.$$
We note that 
$$\frac{4-\alpha}{2-\alpha}\frac{1-\tau}{1+\tau}>1$$ 
if and only if $\tau<\frac{1}{3-\alpha}=\frac{8}{23}$.  We therefore impose the condition $\tau<\frac{8}{23}$ in order to ensure that our range
for $N$ is nonempty.

It should be noted that in this section we have made nontrivial use of
the fact that our coefficients are the indicator function of the
primes.  If we want to estimate a general Type II sum with
coefficients $\beta_n$ then different bounds must be used.
Specifically, if we use Duke, Friedlander and Iwaniec's result,
\cite[Theorem 2]{dfi}, then we can take $\alpha=\frac{1}{48}$.  This
is much worse than the value $\frac{1}{8}$ which we have for our
special coefficients; although even that is considerably weaker than
$\alpha=\frac{1}{2}$, which we conjecture should be best possible. 

\subsection{Completing the Proof of Theorem \ref{typeii}}

The result follows on combining all the above estimates.  We have 
\begin{eqnarray*}
S_1&=&S_{1,1}-2S_{1,2}+S_{1,3}\\
&=&S_{1,1}-\frac{3N^2M\hat W(0)^3z^2}{q^2}+O_A\left(\frac{z^4(\log z)^{-A}}{M}\right)\\
&=&S_2-\frac{3N^2M\hat W(0)^3z^2}{q^2}+O_A\left(\frac{z^4(\log z)^{-A}}{M}\right)\\
&=&S_3+S_4-\frac{3N^2M\hat W(0)^3z^2}{q^2}+O_A\left(\frac{z^4(\log z)^{-A}}{M}\right)\\
&=&S_4+O_A\left(\frac{z^4(\log z)^{-A}}{M}\right)\\
&=&S_5+S_7+O_A\left(\frac{z^4(\log z)^{-A}}{M}\right)\\
&=&O_A\left(\frac{z^4(\log z)^{-A}}{M}\right).\\
\end{eqnarray*}
It follows that 
$$S=O_A(z^2(\log z)^{-A}),$$
as required.

\section{Proof of the Theorems}\label{proofs}
\subsection{Proof of Theorem \ref{mainthm}}

Suppose $MN=\frac{x}{4}$ and $M\leq z^{2-\delta}$, for some $\delta>0$.  For any $A>0$ we have 
$$\sum_{m\sim M}\varpi(m)=M+O_A(M(\log M)^{-A}).$$
It follows by Theorem \ref{typei} that 
$$\sum_{m\sim M,n\sim N}\varpi(m)\Phi(mn)=\frac{\hat W(0)}4z^2+O_{\delta,A}(z^2(\log z)^{-A});$$
the fact that $\varpi(n)$ is only bounded by $\log n$ does not matter as this factor can be absorbed into the error term.

Suppose, in addition, that   
$$\max(z,\frac{q}{z^{1-\delta}})\leq N\leq z^{\frac{16}{15}-\delta}.$$  
It follows from Theorem \ref{typeii} that for any $A>0$ we have 
$$\sum_{m\sim M,n\sim N}\varpi(m)(\varpi(n)-1)\Phi(mn)\ll_{A,\delta} z^2(\log z)^{-A}.$$
Combining these two estimates we immediately deduce that
$$\sum_{m\sim M,n\sim N}\varpi(m)\varpi(n)\Phi(mn)=\frac{\hat W(0)}{4}z^2+O_{A,\delta}(z^2(\log z)^{-a}).$$
If $m$ and $n$ are prime then $\varpi(m)\varpi(n)\asymp (\log z)^2$.  It follows that for sufficiently large $q$ we have 
$$\twosum{m\sim M,n\sim N}{mn\in\e_2}\Phi(mn)\gg \frac{z^2}{(\log z)^2}.$$
For $\tau<\frac{8}{23}$ there are exponents $a(\tau)<b(\tau)$ such that the above bound holds for any range $(M,2M]\subseteq(z^{a(\tau)},z^{b(\tau)}]$.  There are therefore $\gg_{\tau} \log z$ dyadic ranges available. Theorem
\ref{mainthm} follows.  

\subsection{Proof of Theorem \ref{diophantine}}

Suppose $\alpha$ is irrational and $\tau<\frac{8}{23}$.  By replacing $\tau$ by $\tau+\epsilon$ for a sufficiently small $\epsilon>0$ it is enough to show that there are infinitely many $n\in\e_2$ with 
$$\|n\alpha\|\ll n^{-\tau}.$$
Let $\frac{c}{q}$ be a convergent in the continued fraction expansion of $\alpha$ with a sufficiently large denominator.  We therefore have 
$$|\alpha-\frac{c}{q}|\leq \frac{1}{q^2}.$$
If we let $x=q^{\frac{2}{1+\tau}}$, $z=\frac{x}{q}$ and $a=\overline c$ then any $n\in\ca$ satisfies 
$$an\equiv k\pmod q\text{ for some }k\in [0,z].$$
We therefore have 
$$\|\frac{an}{q}\|\leq\frac{z}{q}.$$
It follows that 
$$\|n\alpha\|\leq\|(\alpha-\frac{c}{q})n\|+\|\frac{an}{q}\|\ll n^{-\tau}.$$
Since there are infinitely many convergents to $\alpha$ it is thus sufficient to show that $\ca$ contains members of $\e_2$.  This follows from Theorem \ref{mainthm}.  

\subsection{Proof of Theorem \ref{palindrome}}

Recall that 
$$\p_3(b)=\{j(b^2+1)+kb:j\in (0,b)\cap\Z,k\in [0,b)\cap\Z\}.$$
We take $\tau=\frac{1}{3},q=b^2+1$, $z=b$, $x=b^3$ and $a=b$.  The set $\ca$ is then contained in $\p_3(b)$ so the result follows from Theorem \ref{mainthm}.

\addcontentsline{toc}{section}{References} 
\bibliographystyle{plain}
\bibliography{biblio} 

\bigskip
\bigskip

Mathematical Institute,

24--29, St. Giles',

Oxford

OX1 3LB

UK
\bigskip

{\tt irving@maths.ox.ac.uk}

\end{document}